\newtheorem{theorem}{Theorem}[section]
\newtheorem{lemma}{Lemma}[section]
\newtheorem{corollary}{Corollary}[section]
\newtheorem{remark}{Remark}[section]
\newtheorem{definition}{Definition}[section]
\newenvironment{proof}[1][Proof]{\textbf{#1.} }{\ \rule{0.5em}{0.5em}}
\author{Yves Le Jan}
\title{Connections and loops intertwinning } 
\begin{document}
\maketitle

\footnotetext{ Key words and phrases:holonomy, Yang-Mills, split and merge}
\footnotetext{  AMS 2020 subject classification:  60J27, 60G60.}

%
 On a finite graph, we prove that trace of holonomies determine an intertwining relation between merge-and-split generators on collections of geodesic loops ensembles and Casimir operators on moduli of unitary connections. By adding a deformation part to the generator on loops, this result is extended to the Casimir operator modified in order to be self adjoint with respect to Yang-Mills measure.

\section{Generalities}\label{geol}
Recall a few usual definitions and properties for which we can refer to \cite{stfl}.\\
We consider a finite connected graph $\mathcal{G}=(X,E)$. $X$ is the set of vertices and $E$ the set of edges. We assume there are no loop edges nor multiple edges, and denote by $E^o$ the set of oriented edges.\\
A path is a sequence of vertices such that consecutive elements define an oriented edge. A based loop is a path whose starting point coincides with its endpoint (and is defined as the base point).\\ A geodesic path is a non backtracking path.
Given $\omega$ any finite path in $X$, the reduced path
$\omega^{R}$ is the geodesic arc defined by projection of the geodesic arc between the starting point and the endpoint of any lift
of $\omega$ to the universal cover.

Tree-contour-like based loops can be defined as discrete based loops whose
lifts to the universal cover of the graph are still based loops. The reduced
path $\omega^{R}$\ can equivalently be obtained by removing all
tree-contour-like based loops imbedded into it. This can be done iteratively by removing pairs of opposite consecutive edges. \\
Loops are defined as equivalence classes of based loops under the
natural shift $\theta$ defined as follows:
\begin{center}
 If  $\xi=(\xi
_{0}, \xi_1, ...,\xi_{p(\xi)}=\xi_0)$,
 $\theta\xi=(\xi
_{1},...,\xi_{p(\xi)},\xi_{p(\xi)+1}=\xi_1)$.\\
\end{center}
Given a loop $l$, there is a canonical geodesic loop $l^R$ associated with it. It is
obtained  by removing all tree-contour-like based loops imbedded into it.\\

Geodesic loops are in bijection with the set of conjugacy classes
of the fundamental group of the graph. \\
Among geodesic loops, we can distinguish specific types: Primitive geodesic loops, which are in bijection with  primitive conjugacy classes and
circuits in which the same edge is not visited twice.\\

By collection, \index{collection}we mean that each element of it has a multiplicity (which is one in the case of a set). One could alternatively refer to collections as multisets, (or non-negative integer valued point measures). \\

\section{Split and merge generators}\label{SM}
The simplest variables defined on discrete or continuous loops are the number of crossings of an oriented edge $(x,y)$%
\[
N_{x,y}(l)=\#\{i:\xi_{i}=x,\xi_{i+1}=y\}
\]
(recall the convention $\xi_{p}=\xi_{0})$ and the number of visits to a vertex $x$ %
\[
N_{x}(l)=\sum_{y}N_{x,y}(l).%
\]
Note that $N_{x}=\#\{i\geq1:\xi_{i}=x\}$\ (except for trivial one point loops
for which it vanishes, and which are not considered here).\\
 $\{ N_{x,y}(l),\, (x,y)\in E^o\}$ can be referred to as the (oriented) edge occupation field defined by the loop. We also define $N_{\{x,y\}}(l)= N_{x,y}(l)+ N_{y,x}(l)$.\\
 These definitions extend trivially to a collection $\mathcal L$ of loops. For example, $N_{x,y}(\mathcal L)=\sum_{l\in \mathcal L} N_{x,y}(l)$.\\

We define a \emph{network} to be a  $\mathbb{N}$ valued function defined on oriented edges of the graph. It is given by a matrix $k$ indexed by $X$ with  $\mathbb{N}$-valued coefficients which vanishes on the diagonal and on entries $(x,y)$ such that $\{x,y\}$ is not an edge of the graph. \emph{We say that $k$ is Eulerian iff $$ \sum_y k_{x,y}= \sum_y k_{y,x}.$$ For any Eulerian network $k$, we define $k_x$ to be $\sum_y k_{x,y}=\sum_y k_{y,x}$ and set $\vert k \vert = \sum_{x} k_{x}$.}
It is obvious that the edge occupation field $N(\mathcal{L})$) defines a random network which satisfies the Eulerian property.\\

\begin{definition}We say that a Eulerian network $j$ is a flow iff $j_{x,y}\,j_{y,x}=0$ for all edges $\{x,y\}$. \index{flow}
\end{definition}
Note that  a flow defines an orientation on edges on which it does not vanish. \\
It is easy to check that the measure $j_x=\sum_y j_{x,y}$ is preserved by the associated Markovian matrix $q[j]$ defined as follows:
$q[j]_{x,y} =\frac{j_{x,y}}{j_x}$ if $j_x >0$, $q[j]_{x,y} =\delta_{x,y}$ if $j_x =0$.\\
We can define the flow $J(k)$ associated to any Eulerian network $k$ by $$J(k)_{x,y}=1_{\{k_{x,y}-k_{y,x}>0\}}[k_{x,y}-k_{y,x}].$$


 For any vertex $x$, a Markov chain on the collections of discrete loops can then be naturally constructed as follows: two indices are chosen uniformly and independently  among the $n$ $x$-visit indices of all loops. If they belong to different loops, these loops are concatenated at these indices.  If they are distinct but on the same loop, the loop is cut at these indices to form two loops by connecting the two ends of each part. There is no change when the two choices coincide. This split and merge mechanism is inspired by \cite{pitpi}.\\
 The transition matrix of this Markov chain on collections of discrete loops will be denoted by $Q_{(x)}$ in the following.\\
%
 If we let $x$ vary, the corresponding Markov chains can be combined into a continuous time Markov chain using independent exponential jump times of rates $N_x^2$. The recurrence classes of the resulting stationary 
 continuous time chain on collections of discrete loops are the collections which induce the same network. The infinitesimal generator of this split-and-merge process on collections of discrete loops  is: 
  $$B^{SM}=\sum_x N^2_x [Q_{(x)}-I].$$ 
 A similar construction can be done for an oriented edge $(x,y)$. Two visit indices are chosen uniformly among those at which starts a $(x,y)$-crossings in some loop. If they belong to different loops, these loops are merged as described above. If they are distinct but on the same loop, the loop is disconnected in two paths from $x$ to $x$, both starting with the edge $(x,y)$, to form two loops. Finally, nothing happens if the two choices coincide. The corresponding transition matrix on collections of discrete loops is denoted by $K^+_{(x,y)}$. This transition matrix will be used on collections of geodesic loops.  It is clear that for any oriented edge $(x,y)$, the operations of splitting and merging we just defined to construct $K^+_{x,y}$ preserve geodesic loops.\\
If we let $(x,y)$ vary, they can be combined with jump rates $N_{x,y}^2$ to get a continuous time Markov chain on collections of geodesic loops. The infinitesimal generator of this split-and-merge process on collections of geodesic loops  is:             $$B^{SM+}=\sum_{x,y}N_{x,y}^2[K^+_{x,y}-I].$$ 
 We can define another split-and-merge transition matrix on collections of discrete loops which involves cancelations of opposite edge-crossings. Consider an oriented edge $(x,y)$ and a collection of loops. If both $(x,y)$ and $(y,x)$ are  traversed by at least one loop in that collection, we can define a random collection as follows: sample uniformly a random element among all loop crossings of $(x,y)$ and another one  among all loop crossings of $(y,x)$. If these crossings occur on the same loop, this loop can be decomposed into the crossing of $(x,y)$ followed by a bridge from $y$ to $y$ followed by the crossing of $(y,x)$ , followed finally by a bridge from $x$ to $x$ . These bridges induce two loops which we say are produced by a negative $\{x,y\}$-split. If these crossings occur on different loops, these loops can be decomposed respectively in a crossing of $(x,y)$ followed by a bridge from $y$ to $x$ and a crossing of $(y,x)$ followed by a bridge from $x$ to $y$. Then these two bridges can be merged in one loop.
This construction provides a transition probability on loop collections which we denote by $K^-_{\{x,y\}}$. \index{$K^-_{\{x,y\}}$}
 The merging operation may produce a loop with backtracking. Therefore we will apply the reduction map $l \longrightarrow l^R$ (Cf. section \ref{geol}) to each merging output and denote by  $K^{-\,R}_{x,y }$ the modified transition kernel. This may produce an empty collection. If we let $\{x,y\}$ vary, the corresponding Markov chains can be combined with jump rates $N_{x,y}N_{y,x} $ to generate a continuous time Markov chain. 
The infinitesimal generator of this split-and-merge process on collections of geodesic loops  is:   $$B^{SM-}=\sum_{x,y}N_{x,y}N_{y,x}[K^{-\,R}_{x,y}-I].$$
\section{Holonomies and Gauge fields}
 In this section, given a group $N$, we introduce $N$-connections on a graph and loop holonomies. 
\begin{definition}
Consider any group $N$. Define the gauge group $\mathcal{X}$ to be \index{gauge group} \index{$\mathcal{X}$} the direct product $N^X$, and let $\mathcal{C}(\mathcal G, N)$ \index{$\mathcal{C}(\mathcal G, N)$} be the set of $N$-connections i.e. maps $m$ from oriented edges into $N$ such that $m(x,y)=m(y,x)^{-1}$. An equivalence relation is naturally defined on this set: $m$ is equivalent to $m'$ if and only if there exists an element $h$ of the gauge group such that $m'(x,y)=h(y)^{-1} m({x,y})h(x)$ ($m(x,y)$ is seen as acting on the left to map a fiber above $x$ onto a fiber above $y$, as the action of the linear group on frame bundles in differentiable geometry). By definition, $N$-connection moduli are the corresponding equivalence classes. The set of  $N$-connection moduli on $\mathcal G$ is denoted by $\mathcal{C}_{\sim}(\mathcal G,N)$.  \index{connection}  \index{$\mathcal{C}_m$} \index{$\mathcal{C}_m(\mathcal G,N)$}
\end {definition}

\begin{definition}
Given any discrete loop $l_{x}$ based at $x$ and a representative $m$ of a connection modulus $A$, let $h_m(l_x)$ be the element of $N$  obtained  by multiplying the images under $m$ of the oriented edges of the loop in cyclic order, starting from the base point.  The holonomy of the corresponding loop $l$ is defined as the conjugacy class of this product. It depends only on  the connection modulus and on the loop, and will be denoted by $h_A(l)$. \index{$h_A$}  \index{holonomy}

\end {definition}
\begin{remark}\label{conrk}
a) Note that  the holonomy depends only on the geodesic loop associated with $l$.  \\
 b) Given a spanning tree rooted in $x_0$ and a connection $m\in \mathcal{C}(\mathcal {G}, N)$, define a equivalent map $\tilde{m}$ by conjugation, using the gauge group element $g_x,\, x\in X$ in which $g_x$ is the product of the values of $m$ along the unique geodesic in the tree from the root to $x$. We can check that  $\tilde{m}$ assigns the identity to all tree edges. Moreover, if $m'$ is equivalent to $m$, i.e. there exists a gauge group element $(g'_x,\, x\in X)$ such that $m'_{x,y}= [g'_x]^{-1}m_{x,y}g'_y$, then  $\tilde{m'}=[g'_{x_0}]^{-1}mg'_{x_0}$. \emph{Connection moduli are therefore in one to one correspondence with equivalence classes of $N^r$ under simultaneous conjugation.}

 \begin{remark}\label{didon}
A similar definition can be given for the gauge group. Given an element $\{g\}$ of the gauge group $\mathcal{X}$ we can associate to any discrete loop $\xi= (x_0,\, x_1, ..., x_{n-1 },\,x_0)$ the conjugacy class $q_{\{g\}}(l)$ of the product $\prod_0^{n-1}\{g\}(x_i)$.
\end{remark} 
%
%
%
 
 \end{remark}
 \emph{Example: One-forms.} One-forms on $\mathcal{G}$ are real valued functions $\omega$ on oriented
edges such that $\omega^{x,y}=-\omega^{y,x}$. They define $U(1)$ connections. For a one-form $\omega$, the $U(1)$-holonomy of a loop $l$ is given by $e^{i\int_l\omega}$. 
%
  \begin{remark}
  Assume that the graph is finite.
 If we consider all finite groups $N$, and all $N$-connection moduli, the variables $\{ h_A(l)\}$ determine the geodesic loop $l^R$. The proof, given in \cite{stfl}, relies on the fact that free groups are conjugacy separable  (\cite{Stebe}). In other terms, given two elements belonging to different conjugacy classes, there exists a finite quotient of the free group in which they are not conjugate. 
\end{remark}

An important result about holonomies is the following, in which we denote by $U(d)$ the group of unitary $d\times d$ matrices.
\begin{theorem}\label{coinc}
On a finite graph, two $U(d)$-connections are equivalent if all traces of loop holonomies coincide.
\end {theorem}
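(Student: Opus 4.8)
The plan is to reduce the statement to a question about simultaneous conjugacy of tuples of unitary matrices and then invoke the fact that a semisimple complex representation is determined by its character. First I would fix a spanning tree $T$ of $\mathcal G$ rooted at a vertex $x_0$ and apply Remark~\ref{conrk}(b): every $U(d)$-connection is gauge-equivalent to one that assigns the identity to all tree edges, and after this normalisation the only residual gauge freedom is global conjugation by a single element of $U(d)$. A normalised connection is then completely encoded by the values $U_1,\dots,U_r\in U(d)$ it assigns to the $r=|E|-|X|+1$ non-tree edges, $r$ being the rank of the free group $\pi_1(\mathcal G)$. Writing $\rho,\rho'\colon \pi_1(\mathcal G)\cong F_r\to U(d)$ for the two normalised connections attached to the two given connections, the theorem becomes: \emph{if} $\mathrm{tr}\,\rho(w)=\mathrm{tr}\,\rho'(w)$ for every $w$, \emph{then} there is $g\in U(d)$ with $\rho'(\cdot)=g^{-1}\rho(\cdot)\,g$.

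Next I would translate the hypothesis. In the normalised picture each generator $U_i$ is exactly the holonomy of the primitive geodesic loop obtained by closing the $i$-th non-tree edge through the tree (all tree edges contribute the identity), and the holonomy of an arbitrary geodesic loop $l$ is a word $w(U_1,\dots,U_r)$ in these generators. Its trace is conjugation-invariant, hence well defined on the conjugacy class that constitutes the holonomy. Since by Remark~\ref{conrk}(a) holonomy depends only on $l^R$, and geodesic loops are in bijection with conjugacy classes of $\pi_1(\mathcal G)$, letting $l$ range over all loops lets $w$ range over all of $F_r$ up to conjugacy. Thus the coincidence of all traces of loop holonomies is precisely the assertion that the two homomorphisms $\rho,\rho'\colon F_r\to U(d)\subset GL_d(\mathbb C)$ have the same character.

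Now I would conclude from representation theory. Because $U(d)$ is compact, $\rho$ and $\rho'$ are completely reducible, i.e.\ semisimple $\mathbb C[F_r]$-modules. The character of a finite-dimensional complex representation is the sum, with multiplicities, of the characters of its irreducible constituents, and pairwise distinct irreducible characters are linearly independent; hence equality of characters forces $\rho$ and $\rho'$ to have the same irreducible constituents with the same multiplicities, so they are isomorphic as complex representations. This produces $g\in GL_d(\mathbb C)$ with $\rho'(w)=g^{-1}\rho(w)\,g$ for all $w$. I expect this passage from equal traces to isomorphism — resting on complete reducibility of unitary representations together with linear independence of irreducible characters of the infinite group $F_r$ — to be the main obstacle, and the place where the finiteness of the graph (finite rank $r$, finite dimension $d$) is essential.

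Finally I would upgrade the conjugator to a unitary one. Given $\rho'(w)=g^{-1}\rho(w)\,g$ with $\rho,\rho'$ unitary, taking adjoints and using $\rho(w)^{*}=\rho(w^{-1})$ shows that the positive-definite Hermitian matrix $h=g g^{*}$ satisfies $h\,\rho(w)=\rho(w)\,h$ for every $w$. Writing the (left) polar decomposition $g=p\,u$ with $p=h^{1/2}$ and $u\in U(d)$, the factor $p$, being a function of $h$, commutes with the whole image of $\rho$, so $\rho'(w)=u^{-1}p^{-1}\rho(w)\,p\,u=u^{-1}\rho(w)\,u$ for all $w$. Transporting $u$ back through the spanning-tree normalisation, together with the tree gauge transformations, exhibits the two original $U(d)$-connections as equivalent, which completes the proof.
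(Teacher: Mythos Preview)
Your argument is correct. The reduction via a spanning tree to tuples $(U_1,\dots,U_r)$ up to simultaneous conjugation is exactly what the paper does. From there, however, the paper simply cites the invariant-theoretic results of Procesi and of L\'evy, whereas you supply a self-contained proof via character theory: unitary representations of $F_r$ are completely reducible, irreducible characters of any group are linearly independent (by the density/Burnside argument), hence equal characters force isomorphic semisimple representations, and a polar-decomposition step upgrades the intertwiner to a unitary one. Each step is sound; in particular your use of linear independence of irreducible characters is valid for arbitrary groups and does not require $F_r$ to be finite.

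The trade-off is this: the cited invariant-theoretic route shows more --- that the ring of conjugation-invariant polynomial functions on $r$-tuples of matrices is generated by traces of words, and then uses that closed orbits (here, compact $U(d)$-orbits) are separated by invariants. Your character-theoretic route is shorter and avoids GIT entirely, but is tailored to the unitary setting (complete reducibility is what makes characters determine the module). One small quibble: the finiteness of the graph enters only to ensure $r<\infty$ so that the spanning-tree normalisation makes sense; the character argument itself needs only $d<\infty$, not finite $r$.
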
 
\begin{proof}
From remark \ref{conrk} b), it is equivalent to show that given $r$ unitary matrices, the traces of all products formed with these matrices and their inverse determine these matrices up to simultaneous conjugation. This is proved in section 11 of \cite{Proc} and in \cite{TL}.
\end{proof}

\section{Random connections, and Yang-Mills measure}\label{ymil}
Let $N$ be any topological group equipped with a finite bi-invariant Haar measure. On any graph $\mathcal{G}=(X,E)$, the normalized Haar measure defines a product measure on the set of maps from $E$ to $N$. Any choice of orientation on $E$ defines a bijection between maps from oriented edges to $N$ and the group $N^E$ such that opposite edges have inverse images. Moreover the image of the product Haar measure on the group $N^E$ does not depend on the orientation choice. It is by definition the Haar measure on $\mathcal{C}( \mathcal{G},N)$. Then the Haar measure on connection moduli is defined as the image of this measure by the quotient map.\\
\emph{Let us now assume that $N$ is the unitary group $U(d)$. and that the graph is the discrete torus $[\mathbb{Z}/L\mathbb{Z}]^n$, with $n\geq 2$.} We can define the set of plaquettes $\mathcal{P}$ as the set of geodesic loops of (shortest) length 4. 
  
Consider the Haar measure on the set of $U(d)$-connection moduli on  $[\mathbb{Z}/L\mathbb{Z}]^{n}$ defined by the Haar measure on $U(d)$. For any positive constant $k$, we can assign to each connection modulus $A$ the weight $$Y_{k}(A)=e^{-k\sum_{\eta\in \mathcal{P} } [1-\frac{1}{d}Tr(h_A(\eta))]}.$$ Then the measure of density $Y_{k}$, properly normalized, is known as a Yang-Mills measure. The holonomy traces of plaquettes with opposite orientation are conjugate complex numbers, so that $Y_{k}$ is positive.
 By remark \ref{conrk} b), the set of connection moduli on $[\mathbb{Z}/L\mathbb{Z}]^{n}$ can be identified with a set of $r=L^n(n-1)+1$ elements of $U(d)$ defined up to simultaneous conjugacy. In particular, if $N=U(1)$ the set of connections can be identified with the torus $[\mathbb{R}/2\pi \mathbb{R}]^r$.

\begin{remark} 
- $[\mathbb{Z}/L\mathbb{Z}]^n$ is an Abelian cover of a graph with one vertex and $n$ loop edges.  Such "bouquet graphs" do not satisfy our standing assumptions but can be transformed into graphs which fit in our framework by adding two or more intermediate points on each loop edge. \\
There are other examples of lattices which can be treated similarly such as the toroidal cover of the tetrahedron for which the plaquettes are easily seen to be decagons.\\
- The construction of Yang-Mills measures could be extended to graphs which are not necessarily Abelian covers. Plaquettes could be defined as geodesic simple circuits which can intersect at most in one edge. 
\end{remark}
\section {Split-and-merge and Casimir operators} 
The Lie algebra of $U(d)$ is the space of antihermitian matrices . The Killing form defined by $Tr(WW^*)$ provides this space with a Hermitian structure. Given any antihermitian matrix $E$, let $\mathfrak{L}_E$ denote the left Lie derivative: For any smooth function $f$ on $U(d)$, $\mathfrak{L}_E f(g)=\lim_{\epsilon\rightarrow 0}\frac{1}{\epsilon}(f(\exp(\epsilon E)g)-f(g))$. In particular, given any fixed matrix $a$, $\mathfrak{L}_E Tr(ga)=Tr(Ega)$ and $\mathfrak{L}_E Tr(g^*a)=-Tr(gEa)$.\\ Given an orthonormal base $W_l$, the second order operator  $\mathfrak{A}=\sum_l {\mathfrak{L}_{W_l}}^2$, known as the Casimir operator, does not depend on the base and commutes with any left Lie derivative and with right or left multiplication by a group element. \index{$\mathfrak{L}_E$} \index{$\mathfrak{A}$}  \index{Casimir operator}

It operates on central functions i.e. functions invariant under conjugation. Note that on central functions, left and right Lie derivatives coincide.\\
For $d=1$, the Lie algebra is the line $\{i\omega,\, \omega\in \mathbb{R}\}$, and $\mathfrak{A}=\frac{d^2}{d\omega^2}.$\\
The Casimir operator $\mathfrak{A}$ is self adjoint with respect to the Haar measure. It generates a convolution semigroup $H_t$ of symmetric \index{$H_t$} (i.e. such that $H_t(u)=H_t(u^*)$) central functions $H_t$ (the heat semigroup) which solves the Fokker-Planck equation $H_t-I=\int_0^t \mathfrak{A}H_sds$ and defines the Brownian motion on $U(d)$.\\
For any smooth functions $f_1$ and $f_2$ on $U(d)$, we define the energy density  $$\Gamma(f_1,f_2)=\frac{1}{2}[\mathfrak{A}[f_1f_2]-f_1\mathfrak{A}f_2-f_2\mathfrak{A}f_1].$$
It has a derivation property:
$$\Gamma(f_1f_2,f_3)=f_2 \Gamma(f_1,f_3)+f_1\Gamma(f_2,f_3).$$
Note that for any $n$-tuple of smooth functions $f_i$, and any $m$-tuple $g_j$
\begin{equation}\label{gagaz}
\mathfrak{A}[\prod f_i]=\sum_i \mathfrak{A}[f_i]  \prod_{j\neq i}f_j +2\sum_{i< k}\Gamma(f_j,f_k)\prod_{l\neq j,k}f_l,
\end {equation}
\begin {equation}\label{gagaz2}
\Gamma(\prod_i f_i,\prod_j g_j)=\sum_{i,j}\Gamma(f_i, g_j)\prod _{i'\neq i}f_{i'}\prod _{j'\neq j}g_{j'}.
\end {equation}

The Casimir operator extends naturally to an operator $\mathfrak{A}^{(\mathcal{X})}$ on smooth functions on the gauge group $\mathcal{X}$.
$\mathfrak{A}^{(\mathcal{X})}$ is the sum $\sum_{x\in X} \mathfrak{A}^{(x)} $ of the Casimir operators $\mathfrak{A}^{(x)}$ acting on each coordinate of $\mathcal{X}=U(d)^X$. We can define in the same way the Casimir operator $\sum_{\{x,y\}\in E}\mathfrak{A}^{(\{x,y\})} $ on smooth functions on $U(d)^E$ which induces after choosing an arbitrary orientation $o$ on each edge, an operator $\mathfrak{A}^{(\mathcal{C},o)}$ on smooth functions on $\mathcal C(\mathcal{G}, U(d))$. 
 Tensor products of heat convolution semigroup on $U(d)$ naturally define one on $U(d)^E $ and hence, once an orientation $o$ has been chosen, a heat semigroup $H_t^{\mathcal{C},o}$ on $\mathcal{C}(\mathcal{G}, U(d))$: Let $u_{t}(\{x,y\})$ be independent $H_t$-distributed r.v. indexed by $E$. Then, given a connection $m$, to get a sample of the distribution $H_t^{\mathcal{C},o}(m,dm')$, set, for each positively oriented edge $(x,y)$, $m^t_{x,y}= u_t(\{x,y\})m_{x,y}$ and $m^t_{y,x}= m_{x,y}^*u^*_t(\{x,y\})=m_{y,x}u_t^*(\{x,y\})$.

One can then check that the distribution of the random connection defined by $m^t$ is unchanged if we change the orientation choice: If we reverse the orientation of one positively oriented edge $(x,y)$, we now have $m^t_{x,y}=m_{x,y}u_t(\{x,y\})^*$. Replace $u_t(\{x,y\})$ by $m_{x,y} u^*_t(\{x,y\})m_{x,y}^*$, which has the same distribution, to get this new value.
Moreover, if we perform a gauge transformation in which $m$ is replaced by  $m^{(g)}$, with  ${m}^{(g)}_{x,y}=g_x^*m_{x,y}g_y$,
we can replace, for each positively oriented $(x,y)$ $u_t(\{x,y\})$ by $g_x^*u_t(\{x,y\})g_x$, which does not change the joint distribution, to get $m^{t(g)}$ instead of $m^t$. Therefore, the distribution of the modulus of this random connection depends only on the modulus of $m$.  We denote by $H_t^{(\mathcal{C})}$ the heat semigroup on connections defined in this way.  It is now clear that the operator  $\mathfrak{A}^{(\mathcal{C},o)}$ does not depend on the orientation choice. We will denote it by $\mathfrak{A}^{(\mathcal{C})}$. We have also proved that as $H_t^{(\mathcal{C})}$, it acts on functions defined on the moduli space. In what follows, we consider its restriction to such functions.

$\Gamma^{(\mathcal{X})}$ and $\Gamma^{(\mathcal{C})}$ are defined as $\Gamma$ and satisfy with $\mathfrak{A}^{(\mathcal{X})}$ and  $\mathfrak{A}^{(\mathcal{C})}$ respectively the equations (\ref{gagaz}) and (\ref{gagaz2}). 
Most importantly, the next theorem also relates the Casimir operators on connections and on the gauge group to generators of split-and-merge processes on ensembles of discrete loops defined in section \ref{SM}.
%
We shall use the notation: $$V=\sum_x  N_x^2,\;\;\;S=\sum_x N_x=\sum_{x,y}N_{x,y},$$ $$V^+=\sum_{\{x,y\}}(N_{x,y}^2+N_{y,x}^2),\;\;V^-=2\sum_{\{x,y\}}N_{x,y}N_{y,x}.$$
\begin{theorem}\label{chacha}
a) For any connection modulus $A$, and collection of geodesic loops $\tilde{\mathcal{L}}$, setting  $\tau_A(\tilde{\mathcal{L}})=\prod_{l\in \tilde{\mathcal{L}}} Tr(h_A(l))$ and $\tau_A( \emptyset)= d$, we have the intertwining property \footnote{Note that on the right hand side, the Casimir operator acts on $\tau_A(\tilde{\mathcal{L}})$ with $\tilde{\mathcal{L}}$ fixed and on the left hand side, the split and merge operators act on $\tau_A(\tilde{\mathcal{L}})$ with $A$ fixed.}
$$\mathfrak{A}^{(\mathcal{C})}\tau_A(\tilde{\mathcal{L}})=-[{(d-1)S+V^+ -V^-}+B^{SM+}-B^{SM-}]\tau_A(\tilde{\mathcal{L}}).$$
In particular, for any collection of
geodesic loops $\mathcal {L}_{flw}$ whose induced network is a flow,\\
$$\mathfrak{A}^{(\mathcal{C})}\tau_A(\mathcal{L}_{flw})=-[{(d-1)S+V^+}+B^{SM+}]\tau_A(\mathcal{L}_{flw}).$$
b) For any gauge group element $\{g\}$ and collection of loops $\mathcal{L}$, set, with the notation introduced in remark \ref{didon}, $T_{\{g\}}(\mathcal{L})=\prod_{l\in  \mathcal{L}} Tr (q_{\{g\}}(l))$. Then:
$$\mathfrak{A}^{(\mathcal{X})}T_{\{g\}}(\mathcal{L})=-[(d-1) S +V +B^{SM}]T_{\{g\}}(\mathcal{L}).$$
\end{theorem}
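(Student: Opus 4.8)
The plan is to expand both sides explicitly, the engine being two contraction identities for the Casimir of $U(d)$. For an orthonormal basis $W_l$ of the antihermitian matrices one has $\sum_l W_l^2=-d\,I$ and, from $\sum_l (W_l)_{ab}(W_l)_{cd}=-\delta_{ad}\delta_{bc}$, the fusion identities
$$\sum_l Tr(W_l X W_l Y)=-Tr(X)Tr(Y),\qquad \sum_l Tr(W_l X)\,Tr(W_l Y)=-Tr(XY).$$
The first shows that differentiating a single trace at two of its matrix slots \emph{splits} it into a product of two traces; the second, combined with the carr\'e du champ identity $\Gamma(f_1,f_2)=\sum_l \mathfrak{L}_{W_l}f_1\,\mathfrak{L}_{W_l}f_2$ (immediate from (\ref{gagaz})), shows that the energy density of two traces \emph{merges} them into one. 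These are exactly the split and merge operations of section \ref{SM}, so the whole proof is a matter of matching terms and signs.

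For a), I would apply $\mathfrak{A}^{(\mathcal C)}=\sum_{\{x,y\}}\mathfrak{A}^{(\{x,y\})}$ to $\tau_A=\prod_l Tr(h_A(l))$ through the Leibniz rule (\ref{gagaz}), obtaining a diagonal part $\sum_l \mathfrak{A}^{(\{x,y\})}[Tr(h_A(l))]\prod_{l'\neq l}$ and an off-diagonal part $2\sum_{l<l'}\Gamma^{(\{x,y\})}(Tr(h_A(l)),Tr(h_A(l')))\prod_{l''\neq l,l'}$. Fixing an orientation $(x,y)$ and writing $g=m_{x,y}$, the loop contributes $N_{x,y}(l)$ factors $g$ and $N_{y,x}(l)$ factors $g^*$, each Lie derivative inserting a $W_l$ before a $g$ with sign $+$ or after a $g^*$ with sign $-$. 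The second order operator then splits into: (i) self terms where both derivatives hit the same crossing, which by $\sum_l W_l^2=-dI$ produce $-d\,(N_{x,y}(l)+N_{y,x}(l))\,Tr(h_A(l))$ and sum to the mass $-dS\,\tau_A$; (ii) within-loop terms between two crossings of equal orientation, which by the first fusion identity are the loops cut out by $K^+_{x,y}$, with sign $-$; (iii) within-loop terms between an $(x,y)$- and a $(y,x)$-crossing, where the two minus signs of the $g^*$ derivatives and the relation $Tr(gAg^*)=Tr(A)$ yield the two bridges of the negative split, with sign $+$. The off-diagonal $\Gamma$ part is identical through the second fusion identity, now joining crossings in two distinct loops: equal orientation gives the $K^+$ merge with sign $-$, opposite orientation the $K^-$ merge with sign $+$. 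In the last case the merged based loop carries a backtracking $g^*g$ at its junction, but since $Tr(h_A(\cdot))$ depends only on the reduced loop (remark \ref{conrk} a)) we may replace it by $K^{-\,R}$; a trivial reduced output is consistent with $\tau_A(\emptyset)=d=Tr(I)$.

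It remains to match coefficients, which is where the care lies. Rewriting the claimed right-hand side as
$$-(d-1)S\,\tau_A-\sum_{(x,y)}N_{x,y}^2\,K^+_{x,y}\tau_A+\sum_{(x,y)}N_{x,y}N_{y,x}\,K^{-\,R}_{x,y}\tau_A,$$
the $-I$ parts of $B^{SM+}$ and $B^{SM-}$ having absorbed $V^+$ and $V^-$, I would expand $\sum_{(x,y)}N_{x,y}^2 K^+_{x,y}\tau_A$ over ordered pairs of $(x,y)$-crossings: the coincident pairs give $\sum_{(x,y)}N_{x,y}\,\tau_A=S\,\tau_A$, the distinct same-loop pairs give (ii), the distinct cross-loop pairs the $K^+$ merges; similarly $\sum_{(x,y)}N_{x,y}N_{y,x}K^{-\,R}_{x,y}\tau_A$ reproduces (iii) and the $K^-$ merges, with no coincident contribution. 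The coincident $S\,\tau_A$ then combines with $-(d-1)S\,\tau_A$ to give precisely the Casimir mass $-dS\,\tau_A$, which is the true origin of the shift $d\mapsto d-1$. Working with ordered edge sums is what reconciles the factor $2$ in (\ref{gagaz}) and in $V^-$ with the one-from-each-type sampling of $K^{-\,R}$, and the sampling of $K^+$ over $N_{x,y}^2$ ordered pairs with the $\sum_{i\neq i'}$ of the Casimir. I expect this reconciliation of combinatorial factors and signs, together with the reduction step for the opposite-orientation merges, to be the main obstacle; everything else is dictated by the two fusion identities. The flow case is immediate, since then $N_{x,y}N_{y,x}=0$ on every edge, killing $V^-$ and $B^{SM-}$.

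Part b) is the same computation with $\mathfrak{A}^{(\mathcal X)}=\sum_{x\in X}\mathfrak{A}^{(x)}$ acting on $T_{\{g\}}=\prod_l Tr(q_{\{g\}}(l))$, the variable at a vertex $x$ being $\{g\}(x)$, which the loop carries $N_x(l)$ times and always with the same sign. There is therefore no $g^*$, hence no opposite-orientation contribution, no $V^-$, no $B^{SM-}$, and no reduction is needed since concatenation at a vertex always returns an admissible discrete loop. The self terms give $-dS\,T$, the within-loop terms the split of $Q_{(x)}$ and the between-loop terms its merge; absorbing the coincident diagonal $S\,T$ into $(d-1)S\,T$ as above yields $-[(d-1)S+V+B^{SM}]T$.
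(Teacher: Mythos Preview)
Your proof is correct and follows essentially the same approach as the paper: both decompose $\mathfrak{A}^{(\mathcal C)}$ edge by edge via the Leibniz rule (\ref{gagaz}), separate same-orientation from opposite-orientation crossings, identify the resulting products of traces with the split/merge outputs of $K^+$ and $K^-$, and absorb the coincident-index diagonal into the shift $d\mapsto d-1$. The only cosmetic difference is that the paper packages the key identities as a separate lemma (computing $\mathfrak{A}$ and $\Gamma$ on $Tr(ga)$, $Tr(g^*a)$, $Tr(gagb)$, $Tr(gag^*b)$, etc.\ via an explicit orthonormal basis of antihermitian matrices), whereas you derive the same fusion/fission formulas directly from the contraction $\sum_l (W_l)_{ab}(W_l)_{cd}=-\delta_{ad}\delta_{bc}$.
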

\begin{remark}\label{duee}
If $\tilde{\mathcal{L}}$ is a collection of non-intersecting circuits, $\mathfrak{A}^{(\mathcal{C})}\tau_A(\tilde{\mathcal{L}})=-d\,S(\tilde{\mathcal{L}}) \tau_A(\tilde{\mathcal{L}}).$
\end{remark}

\begin{remark}
a) In the Abelian case $(d=1)$, $A$ can be represented by a one-form $\omega$, with $\tau_A(\mathcal{L})=\prod_{x,y}e^{\sqrt{-1}\omega_{x,y}N_{x,y}(\mathcal{L})}. $ We can check directly that the theorem holds, but $B^{SM\pm}$ vanish.\\
 b) By remark \ref{conrk} b), and theorem \ref{coinc}, the functions $\tau_A(\tilde{\mathcal{L}})$ defined in a) determine the connection modulus $A$. \\
\end{remark}
\begin{proof}
We start the proof of the proposition with the following:
\begin{lemma}\label{sonia}
Let $a$, $b$ and $g$ be unitary matrices in $U(d)$,  and let $t_a(g)$, $r_a(g)$, $u_{a,b}(g)$, $v_{a,b}(g)$ and $w_{a,b}(g)$ denote respectively $Tr(ga) $, $Tr(g^*a)$, $Tr(gagb) $, $Tr(g^*ag^*b) $ and $Tr(gag^*b) $. Then:
$$\mathfrak{A}t_a= -d \,t_a,\; \mathfrak{A}r_a=-d \, r_a$$
$$\mathfrak{A}u_{a,b}=-2 d\, u_{a,b}- 2t_at_b, \; \mathfrak{A}v_{a,b}=-2 d \,v_{a,b}- 2r_ar_b,\; \mathfrak{A}w_{a,b}=-2d\, w_{a,b}+2 t_a r_b ,$$
$$\Gamma(t_a,t_b)=-u_{a,b} \; ,\Gamma(r_a,r_b)= -v_{a,b}\;and\; \Gamma(t_a,r_b)=Tr(ab).$$
\end{lemma}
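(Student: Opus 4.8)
The plan is to reduce all eight identities to a single structural fact about an orthonormal basis $\{W_l\}$ of the Lie algebra $\mathfrak{u}(d)$ of antihermitian matrices, taken with the inner product $\langle X,Y\rangle = Tr(XY^{*})$ that the Killing form induces. That fact is the completeness (Fierz) relation $\sum_l (W_l)_{ij}(W_l)_{km} = -\delta_{im}\delta_{jk}$, which I would verify once on the explicit basis $\{iE_{jj}\} \cup \{\frac{1}{\sqrt{2}}(E_{jk}-E_{kj})\} \cup \{\frac{i}{\sqrt{2}}(E_{jk}+E_{kj})\}$; since $\mathfrak{A}$ is base-independent, one convenient basis suffices. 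From it I extract the three corollaries used throughout: $\sum_l W_l M W_l = -Tr(M)\,I$, its special case $\sum_l W_l^{2} = -d\,I$, and the pairing $\sum_l Tr(W_l M)\,Tr(W_l M') = -Tr(MM')$.

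With these in hand the linear cases are immediate. Applying $\mathfrak{L}_{W_l}$ to $t_a = Tr(ga)$ twice gives $Tr(W_l\,ga\,W_l)$, and summing via $\sum_l W_l (ga) W_l = -Tr(ga)\,I$ yields $\mathfrak{A}t_a = -d\,t_a$; the identical computation for $r_a = Tr(g^{*}a)$, using $\mathfrak{L}_{W_l}Tr(g^{*}a) = -Tr(g^{*}W_l a)$, gives $\sum_l Tr(g^{*}W_l^{2}a) = -d\,r_a$.

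For the quadratic functions I would differentiate twice with the Leibniz rule. Each of $u,v,w$ then splits into two terms in which the two copies of $W_l$ are adjacent, e.g. $Tr(W_l^{2}\,gagb)$ and $Tr(ga\,W_l^{2}\,gb)$ for $u$, each collapsing to $-d$ times the original function by $\sum_l W_l^{2}=-d\,I$, plus one cross term in which the two copies are separated, of the shape $\sum_l Tr(W_l X W_l Y) = -Tr(X)Tr(Y)$. For $u$ one reads off $X = ga,\ Y = gb$, giving the residual $-2\,t_a t_b$, and for $v$ (after a cyclic rotation) $X = ag^{*},\ Y = bg^{*}$, giving $-2\,r_a r_b$; this produces the first two quadratic identities. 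The three $\Gamma$'s then follow instantly from the energy-density identity $\Gamma(f_1,f_2) = \sum_l (\mathfrak{L}_{W_l}f_1)(\mathfrak{L}_{W_l}f_2)$ (a one-line consequence of the Leibniz rule and the definition of $\Gamma$) together with the pairing corollary: $\Gamma(t_a,t_b) = -u_{a,b}$, $\Gamma(r_a,r_b) = -v_{a,b}$, and $\Gamma(t_a,r_b) = -\sum_l Tr(W_l\,ga)\,Tr(W_l\,bg^{*}) = Tr(ga\,bg^{*}) = Tr(ab)$.

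The step I expect to be the main obstacle is the $w$-identity, because interleaving $g$ and $g^{*}$ flips the sign of the cross term. Differentiating $w_{a,b} = Tr(gag^{*}b)$ twice yields $Tr(W_l^{2}gag^{*}b) - 2\,Tr(W_l\,gag^{*}\,W_l\,b) + Tr(gag^{*}W_l^{2}b)$, so the cross term now enters with coefficient $-2$; since $\sum_l Tr(W_l(gag^{*})W_l\,b) = -Tr(gag^{*})Tr(b) = -Tr(a)Tr(b)$, the residual is $+2\,Tr(a)Tr(b)$, and I obtain $\mathfrak{A}w_{a,b} = -2d\,w_{a,b} + 2\,Tr(a)Tr(b)$. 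The safe way to pin this constant down is the degenerate check $a = b = I$: then $w_{I,I}(g) = Tr(gg^{*}) = d$ is constant, so $\mathfrak{A}w_{I,I} = 0$, which is matched exactly by $-2d\cdot d + 2\,Tr(I)Tr(I)$ and forces the $2\,Tr(a)Tr(b)$ normalization with no $g$-dependent residual. Apart from this sign bookkeeping, the whole proof is mechanical once the completeness relation is established.
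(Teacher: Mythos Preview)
Your proof is correct and rests on the same foundation as the paper's: the explicit orthonormal basis $\{iE_{jj},\ \tfrac{1}{\sqrt 2}(E_{jk}-E_{kj}),\ \tfrac{i}{\sqrt 2}(E_{jk}+E_{kj})\}$ of $\mathfrak{u}(d)$. The difference is purely organizational. The paper keeps this basis explicit throughout and, for each of the eight identities, expands the relevant trace in matrix entries and simplifies by hand; you instead isolate the Fierz relation $\sum_l (W_l)_{ij}(W_l)_{km}=-\delta_{im}\delta_{jk}$ once, deduce the two reusable corollaries $\sum_l W_lMW_l=-Tr(M)\,I$ and $\sum_l Tr(W_lM)\,Tr(W_lN)=-Tr(MN)$, and then every identity becomes a one-line application of these together with the carr\'e-du-champ formula $\Gamma(f_1,f_2)=\sum_l(\mathfrak L_{W_l}f_1)(\mathfrak L_{W_l}f_2)$. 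Your packaging is shorter and considerably less error-prone; the paper's entrywise route has the minor advantage of being entirely self-contained, with no auxiliary identity to state.

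One point worth flagging: your computation gives $\mathfrak{A}w_{a,b}=-2d\,w_{a,b}+2\,Tr(a)\,Tr(b)$, not the $+2\,t_a r_b$ printed in the lemma statement. Your version is the correct one: it is exactly what the paper's own entrywise calculation produces in the proof, it passes the consistency check you performed at $a=b=I$ (and more generally at $a=I$ or $b=I$, as the paper itself remarks), and it is the form actually used downstream, where the pair of opposite edge crossings cancels and leaves traces with no residual $g$-dependence. The $+2\,t_a r_b$ in the displayed statement is a typo.
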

\begin{proof}
 For $1\leq i\leq d$, let $u^{(i)}$ denote the matrix whose only non zero entry is $u_{ii}^{(i)}=\sqrt{-1}$. For $1\leq i<j \leq d$, let $u^{(i,j)}$ and $v^{(i,j)}$ denote the matrices whose only non zero entries are respectively $u^{(i,j)}_{ij}=-u^{(i,j)}_{ji}=\frac{1}{\sqrt {2}}$ and $v^{(i,j)}_{ij}=v^{(i,j)}_{ji}=\frac{\sqrt{-1}}{\sqrt {2}}$. One can check easily that these $d^2$ antihermitian matrices define an orthonormal base of the Lie algebra equipped with the scalar product defined by the Killing form, and the following relation
$$\sum_i [u^{(i)}]^2 +\sum_{i<j} [u^{(i,j)}]^2 +\sum_{i<j} [v^{(i,j)}]^2 =-dI$$ which implies the first two identities.
Then, as $u_{a,b}(g)=Tr(gagb)$ it follows that $\mathfrak{A}u_{a,b}(g)=-2\, d Tr(gagb) +2\sum_i Tr[u^{(i)}gau^{(i)}gb] + 2\sum_{i<j}Tr[u^{(i,j)}gau^{(i,j)}gb]+2\sum_{i<j}Tr[v^{(i,j)}gav^{(i,j)}gb]$ \\
$=-2 \,d Tr(gagb) -2\sum_i [ga]_{ii}[gb]_{ii} + \sum_{i<j}( [ga]_{ij}[gb]_{ij}+ [ga]_{ji}[gb]_{ji} -[ga]_{ii}[gb]_{jj}- [ga]_{jj}[gb]_{ii})  - \sum_{i<j}( [ga]_{ij}[gb]_{ij}+ [ga]_{ji}[gb]_{ji} +[ga]_{ii}[gb]_{jj}+ [ga]_{jj}[gb]_{ii}) $ \\
$=-2 \,d Tr(gagb)-2Tr(ga)Tr(gb).$\\
The fourth and fifth identities are proved in the same way:\\
$\mathfrak{A}v_{a,b}(g)=-2\, d Tr(g^*ag^*b) +\sum_i Tr[g^*u^{(i)}ag^*u^{(i)}b] + 2\sum_{i<j}Tr[g^*u^{(i,j)}ag^*u^{(i,j)}b]+2\sum_{i<j}Tr[g^*v^{(i,j)}ag^*v^{(i,j)}b]$ \\
$=-2 \,d Tr(g^*ag^*b) -2\sum_i [ag^*]_{ii}[bg^*]_{ii} + \sum_{i<j}( [ag^*]_{ij}[bg^*]_{ij}+ [ag^*]_{ji}[bg^*]_{ji} -[ag^*]_{ii}[bg^*]_{jj}- [ag^*]_{jj}[bg^*]_{ii})  - \sum_{i<j}
( [ag^*]_{ij}[bg^*]_{ij}+ [ag^*]_{ji}[bg^*]_{ji} +[ag^*]_{ii}[bg^*]_{jj}+ [ag^*]_{jj}[bg^*]_{ii}) $ \\
$=-2 \,d Tr(g^*ag^*b)-2Tr(ag^*)Tr(bg^*).$\\
$\mathfrak{A}w_{a,b}(g)=-2 \,d Tr(gag^*b) -2\sum_i Tr[u^{(i)}gag^*u^{(i)}b] - 2\sum_{i<j}Tr[u^{(i,j)}gag^*u^{(i,j)}b]-2\sum_{i<j}Tr[v^{(i,j)}gag^*v^{(i,j)}b]$ \\
$=-2\, d Tr(gagb) -2\sum_i [gag*]_{ii}[b]_{ii} + \sum_{i<j}( [gag*]_{ij}[b]_{ij}+ [gag*]_{ji}[b]_{ji} -[gag^*]_{ii}[b]_{jj}- [gag^*]_{jj}[b]_{ii})  - \sum_{i<j}
( [gag^*]_{ij}[b]_{ij}+ [gag^*]_{ji}[b]_{ji} +[gag^*]_{ii}[b]_{jj}+ [gag^*]_{jj}[b]_{ii}) $ \\
$=-2 \,d Tr(gag^*b)-2Tr(gag^*)Tr(gb)=2 \,d Tr(gag^*b)-2Tr(a)Tr(b).$\\
Note that $\mathfrak{A}w_{a,b}$ vanishes if $a$ or $b=I$. \\Let us now prove the sixth identity:
$ \Gamma(t_a,t_b)=\sum_i Tr[u^{(i)}ga]Tr[u^{(i)}gb] + \sum_{i<j}Tr[u^{(i,j)}ga]Tr[u^{(i,j)}gb] + \sum_{i<j}Tr[v^{(i,j)}ga][v^{(i,j)}gb] $\\
$=-\sum_i [ga]_{ii}[gb]_{ii} + \frac{1}{2}\sum_{i<j}( [ga]_{ij}-[ga]_{ji})( [gb]_{ij}-[gb]_{ji})-\frac{1}{2}\sum_{i<j}( [ga]_{ij}+[ga]_{ji})( [gb]_{ij}+[gb]_{ji})  $ \\
$=-\sum_i [ga]_{ii}[gb]_{ii} -\sum_{i<j}([ga]_{ij}[gb]_{ji}+[gb]_{ij}[ga]_{ji})=-Tr(gagb)$. \\
The seventh identity is proved in the same way. Let us finally prove the last one:\\
$ \Gamma(t_a,r_b)=-\sum_i Tr[u^{(i)}ga]Tr[g^*u^{(i)}b] - \sum_{i<j}Tr[u^{(i,j)}ga]Tr[g*u^{(i,j)}b] - \sum_{i<j}Tr[v^{(i,j)}ga][g^*v^{(i,j)}b] $\\
$=\sum_i [ga]_{ii}[bg^*]_{ii} - \frac{1}{2}\sum_{i<j}( [ga]_{ij}-[ga]_{ji})( [bg^*]_{ij}-[bg^*]_{ji})+\frac{1}{2}\sum_{i<j}( [ga]_{ij}+[ga]_{ji})( [bg^*]_{ij}+[bg^*]_{ji})  $ \\
$=-\sum_i [ga]_{ii}[bg^*]_{ii} +\sum_{i<j}([ga]_{ij}[bg^*]_{ji}+[ga]_{ji}[bg^*]_{ij})=Tr(gabg^*)=Tr(ab).$
\end{proof}

Let us first complete the proof of theorem \ref{chacha} b), which is easier, using definitions and notation given in proposition \ref{SM}, remark \ref{didon}, and in the introduction to the present proposition. It follows by linear combination from the identity:
\begin {equation}\label{elis}
-\mathfrak{A}^{(x)}T_{\{g\}}(\mathcal{L})=(d-1) \,N_x(\mathcal{L})T_{\{g\}}(\mathcal{L})+N_x^2(\mathcal{L})[Q_{(x)}T_{\{g\}}](\mathcal{L})
\end {equation}
 in which $N_x(\mathcal{L})$ denotes $\sum_{l\in \mathcal{L}} N_x(l)$.\\
To prove (\ref{elis}), note first that, using (\ref{gagaz}), $\mathfrak{A}^{(x)}T_{\{g\}}(\mathcal{L})$ can be decomposed into a sum of terms corresponding to the action of $\mathfrak{A}^{(x)}$ on the holonomy trace of a single loop $\xi$, (with $N_x(\xi)$ insertions of $\{g\}(x)$ in the expression of  $Tr(q_{\{g\}}(\xi))$) and the actions of $\Gamma^{(x)}$ on the holonomy traces of a pair of loops $(\xi_1, \xi_2)$ (with $N_x(\xi_1)N_x(\xi_2)$ pairs of  $\{g\}(x)$ insertions if $\xi_1 \neq \xi_2$ and $\frac{1}{2}N_x(\xi)(N_x(\xi)-1)$ unordered pairs if $\xi_1 =\xi_2)=\xi$. As $\mathfrak{A}^{(x)}$ acts only on single and pairs of insertions, the first and third identities in lemma \ref{sonia} above are easily extended to show that for any discrete loop $\xi=(\xi_i)$, $-\mathfrak{A}^{(x)}Tr(q_{\{g\}}((\xi))$ equals $$d\,N_x(\xi)Tr(q_{\{g\}}(\xi) +2 \sum_{i_1 < i_2,\, \xi_{i_1}=\xi_{i_2}=x} Tr(q_{\{g\}}(\xi_{[i_1,i_2[})Tr(q_{\{g\}}(\xi_{[i_2,i_1 +p(\xi)[}), $$ in which $\xi_{[i_1,i_2[}$ and $\xi_{[i_2,i_1+p(\xi)[}$ denote the loops obtained by splitting $\xi$ at points of indices $i_1$ and $i_2$ and connecting the endpoints of each part.\\
Moreover, the sixth identity gives that for any pair of discrete loop $(\xi^{(1)},\xi^{(2)})$, $\Gamma^{(x)}(Tr(q_{\{g\}}(\xi^{(1)})),Tr(q_{\{g\}}(\xi^{(2)})))=\sum_{(i_1 ,i_2),\, \xi^{(1)}_{i_1}=\xi^{(2)}_{i_2}=x}Tr(q_{\{g\}}(\xi^{(1,i_1)}\cdot \,\xi^{(2,i_2)}))$ in which $\xi^{(1,i_1)}\cdot \,\xi^{(2,i_2)}$ denotes the concatenation of the loops $\xi^{(1)}$ and $\xi^{(2)}$ at the base points of respective indices $i_1$ and $i_2$.
Using equation (\ref{gagaz}) and the definition of $Q_{(x)}$ in section \ref{SM} (which includes diagonal entries $Q_{(x)}(\mathcal{L},\mathcal{L})=\frac{1}{N_x(\mathcal{L})}$, giving $d-1$ instead of $d$ in front of the first term), this completes the proof of equation (\ref{elis}) and therefore the proof of theorem \ref{chacha} b).
 
Let us finally complete the proof of theorem \ref{chacha} a), using the notations of section 2. After noting that $h_A(l)=h_A(l^R)$ implies that $K^-_{\{x,y\}}\tau_A=K^{-\,R}_{\{x,y\}}\tau_A$, it follows by linear combination from the identity:
\begin {equation}\label{elisa}
-\mathfrak{A}^{(\{x,y\})}\tau_A(\tilde{\mathcal{L}})=[(d-1) \,N_{\{x,y\}}+N_{x,y}^2K^+_{(x,y)}+N_{y,x}^2K^+_{(y,x)}-2N_{x,y}N_{y,x}K^-_{\{x,y\}}]\tau_A(\tilde{\mathcal{L}}).
\end {equation}
The proof of (\ref{elisa}) is almost the same as the proof of identity (\ref{elis}), but, after choosing any representative $m$ of the connection modulus $A$ and an orientation $o$, one has to consider the insertions of $m(x,y)$ and $m^*(x,y)$ in $\tau_A(\mathcal{L})$, now using all identities in lemma \ref{sonia}, applied to three different types of pairs of crossing. The only essential difference lies in the occurrence of the term $-K^-_{\{x,y\}}$ which is arising from the expressions for $\mathfrak{A}w_{a,b}(g)$ and $\Gamma(t_a,r_b)(g)$ given in lemma  \ref{sonia}, in which $g$ and $g^*$ cancel each other.\\
For any geodesic loop $\gamma$, if $(x,y)$ is positively oriented,\\$-\mathfrak{A}^{(\{x,y\})}\tau_A(\gamma)
=dN_{x,y}(\gamma)\tau_A(\gamma)+ dN_{y,x}(\gamma)\tau_A(\gamma)$\\
$+2\sum_{\{i_1<i_2, \gamma_{i_1}=\gamma_{i_2}=x, \gamma_{i_1+1}=\gamma_{i_2}+1=y\}}\tau_A( \gamma_{[i_1,i_2[})\tau_A( \gamma_{[i_2,i_1+p(\gamma)[})$\\
$+2\sum_{\{i_1<i_2, \gamma_{i_1}=\gamma_{i_2}=y, \gamma_{i_1+1}=\gamma_{i_2}+1=x\}}\tau_A( \gamma_{[i_1,i_2[})\tau_A( \gamma_{[i_2,i_1+p(\gamma)[})$\\
$-2\sum_{\{i_1< i_2,  \gamma_{i_1} = \gamma_{i_2+1}=x, \gamma_{i_1+1}= \gamma_{i_2}=y\}}\tau_A( \gamma_{[i_1+1,i_2[})\tau_A( \gamma_{[i_2+1,i_1+p(\gamma)[})$\\
$-2\sum_{\{i_1< i_2, \gamma_{i_1} = \gamma_{i_2+1}=y, \gamma_{i_1+1}= \gamma_{i_2})=x\}}\tau_A( \gamma_{[i_1+1,i_2[})\tau_A( \gamma_{[i_2+1,i_1+p(\gamma)[})$.\\
We get the first four terms from the four first identities in lemma \ref{sonia}, in the same order, and the last two terms, in which two opposite edges are cancelled, from the fifth one. Note that as expected, we get the same result if $(x,y)$ is negatively oriented, but we get the first term from the second identity, the second from the first, the third from the fourth and the fourth from the third.\\
 This expression can be rewritten as $dN_{x,y}(\gamma)\tau_A(\gamma)+ dN_{y,x}(\gamma)\tau_A(\gamma)+$
$${2\sum}_{\{\gamma^{(1)},\gamma^{(2)}\}\in Split_{\{x,y\}}^+(\gamma)}\tau_A(\{\gamma^{(1)},\gamma^{(2)}\})
-{2\sum}_{\{\gamma^{(1)},\gamma^{(2)}\}\in Split_{\{x,y\}}^-(\gamma)}\tau_A(\{\gamma^{(1)},\gamma^{(2)}\})$$ in which $Split_{\{x,y\}}^{\pm}(\gamma)$ are the two collections of splitting outputs (unordered pairs of geodesic loops) defined just above ($Split_{\{x,y\}}^-$ being the one involving edge cancellation). We verify that this expression depends only on the connection modulus $A$.

From the definitions of $K_{(x,y)}^+$ and $K_{\{x,y\}}^-$ given in section \ref{SM}, (noting that $K_{\{x,y\}}^+$ also includes diagonal entries giving $d-1$ instead of $d$ in front of the first term), this completes the proof of equation (\ref{elisa}) for a single geodesic loop $\gamma$. \\
Note that $\sum_{\{x,y\}\in E}N_{\{x,y\}}(\gamma)=p(\gamma)$. Then denoting the collections \\$\oplus Split_{\{x,y\}}^{\pm}(\gamma)$ \footnote{We use $\oplus$, not $\cup$, as we deal with collections and multiplicities are added} by $Split^{\pm}(\gamma)$, the term $-\mathfrak{A}^{(\mathcal{C})}\tau_A(\gamma)$ can be rewritten:
\begin {equation}\label{elisa}
d\,p(\gamma)\tau_A(\gamma)+\sum_{\{\gamma^{(1)},\gamma^{(2)}\}\in Split^+(\gamma)}2\tau_A(\{\gamma^{(1)},\gamma^{(2)}\})
-\sum_{\{\gamma^{(1)},\gamma^{(2)}\}\in Split^-(\gamma)}2\tau_A(\{\gamma^{(1)},\gamma^{(2)}\}).
\end{equation}
For any pair of geodesic loops $(\gamma^{(1)},\gamma^{(2)})$, we get from the three last identities in lemma \ref{sonia} that\\ $$\Gamma^{(\mathcal{C})}(\tau_A(\gamma^{(1)}),\tau_A(\gamma^{(2)}))
=-\sum_{( \gamma^{(1)}_{i_1},\gamma^{(1)}_{i_1+1}) =( \gamma^{(2)}_{i_2},\gamma^{(2)}_{i_2+1})}\tau_A(\gamma^{(1,i_1)}\cdot \,\gamma^{(2,i_2)})$$
 $$+\sum_{( \gamma^{(1)}_{i_1},\gamma^{(1)}_{i_1+1}) =( \gamma^{(2)}_{i_2+1},\gamma^{(2)}_{i_2})}\tau_A(\gamma^{(1,i_1)}\cdot \,\gamma^{(2,i_2+1)})$$
 in which $\gamma^{(1,i)}\cdot \,\gamma^{(2,j)}$ denotes the concatenation, with cancelation of inverse edges, of the geodesic loops $\gamma^{(1)}$ and $\gamma^{(2)}$ at the base points of respective indices $i$ and $j$.\\
Note that in the last term, two opposite edges are cancelled. Again, \\$\Gamma^{(\mathcal{C})}(\tau_A(\gamma^{(1)}),\tau_A(\gamma^{(2)}))$ can be rewritten as
\begin{equation} \label{sand}
-\sum_{\gamma \in Merge^+(\gamma^{(1)},\gamma^{(2)})}\tau_A(\gamma)
+\sum_{\gamma \in Merge^-(\gamma^{(1)},\gamma^{(2)})}\tau_A(\gamma)
\end{equation}
 in which $Merge^{\pm}(\gamma^{(1)},\gamma^{(2)})$ are the two collections of merging outputs defined just above.

Considering a finally collection of geodesic loops instead of a single one, the extension of equation (\ref{gagaz}) to $\mathfrak{A}^{(\mathcal{C})}$ allows to complete the proof of the first identity in a). 
\end{proof}

 Denote by $\langle\cdot\rangle$ the integration against the Haar measure on $U(d)$. For any pair of smooth functions $f$ and $g$ on $U(d)$, and any antihermitian matrix $E$, we have 
$\langle f \mathfrak{L}_E g \rangle=-\langle g \mathfrak{L}_E f \rangle$, and consequently, $\langle f \mathfrak{A} g\rangle=\langle g \mathfrak{A} f \rangle$. In particular, $\langle \mathfrak{A} f \rangle$ vanishes. This self-adjointness property extends to the pairs of  Haar measures and Casimir operators ($\langle\cdot\rangle_{(X)}$, $\mathfrak{A}^{(X)}$) and ($\langle\cdot\rangle_{(\mathcal{C})}$, $\mathfrak{A}^{(\mathcal{C})}$) defined respectively on the gauge group $U(d)^X$ and on $U(d)$-connections. 
Then, from theorem \ref{chacha} we get the following identities
\begin{corollary}\label{swingd}
For any set of geodesic loops $\tilde{\mathcal{L}}$,
 $$\langle [{(d-1)S+V^+-V^-}+B^{SM+}-B^{SM-}]\tau_A(\tilde{\mathcal{L}}) \rangle_{(\mathcal{C})}=0. $$

For any set of loops $\mathcal{L}$,
 $$\langle [(d-1)S +V +B^{SM}]T_{\{g\}}(\mathcal{L}) \rangle_{(X)}=0.$$
\end{corollary}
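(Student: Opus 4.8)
The plan is to obtain both identities as immediate consequences of Theorem \ref{chacha} by integrating its two intertwining relations against the relevant Haar measures and invoking the mean-zero property of the Casimir operator recorded just before the statement. The entire analytic content already resides in Theorem \ref{chacha}, so what remains is a purely formal averaging argument.

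First I would record the mean-zero property in the two required settings: $\langle \mathfrak{A}^{(\mathcal{C})} f\rangle_{(\mathcal{C})}=0$ for every smooth $f$ on the connection moduli space, and $\langle \mathfrak{A}^{(\mathcal{X})} F\rangle_{(X)}=0$ for every smooth $F$ on the gauge group. As noted in the text, this follows from the self-adjointness $\langle f\,\mathfrak{A} g\rangle=\langle g\,\mathfrak{A} f\rangle$ applied with $g\equiv 1$: since $\mathfrak{L}_E 1=0$ for every antihermitian $E$ one has $\mathfrak{A}1=0$, whence $\langle \mathfrak{A} f\rangle=\langle f\,\mathfrak{A}1\rangle=0$; and the property extends verbatim to $\mathfrak{A}^{(\mathcal{C})}$ and $\mathfrak{A}^{(\mathcal{X})}$ because each is a sum of single-factor Casimir operators, each self-adjoint with respect to the corresponding normalized Haar measure. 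For the first identity I would then observe that a set of geodesic loops is a special case of a collection, so Theorem \ref{chacha} a) gives, as an identity between functions of $A$ with $\tilde{\mathcal{L}}$ fixed, $[(d-1)S+V^+-V^-+B^{SM+}-B^{SM-}]\tau_A(\tilde{\mathcal{L}})=-\mathfrak{A}^{(\mathcal{C})}\tau_A(\tilde{\mathcal{L}})$. Integrating both sides against $\langle\cdot\rangle_{(\mathcal{C})}$ and using $\langle\mathfrak{A}^{(\mathcal{C})}\tau_A(\tilde{\mathcal{L}})\rangle_{(\mathcal{C})}=0$ yields the claim. The second identity is obtained in exactly the same way from Theorem \ref{chacha} b): there $[(d-1)S+V+B^{SM}]T_{\{g\}}(\mathcal{L})=-\mathfrak{A}^{(\mathcal{X})}T_{\{g\}}(\mathcal{L})$, and integrating against $\langle\cdot\rangle_{(X)}$ annihilates the right-hand side.

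I do not expect a genuine obstacle, since the corollary is a one-line consequence of Theorem \ref{chacha} once the mean-zero property is in hand. The only points deserving a word of care are that $\tau_A(\tilde{\mathcal{L}})$ and $T_{\{g\}}(\mathcal{L})$ are smooth (indeed polynomial in the matrix entries and their conjugates), so that the integrals are finite and the self-adjointness identity genuinely applies; and that throughout, the split-and-merge operators $B^{SM\pm}$ and $B^{SM}$ are understood to act on the loop-collection variable with $A$ (respectively $\{g\}$) held fixed, so that $[\,\cdots\,]\tau_A(\tilde{\mathcal{L}})$ is a smooth function of $A$ to which $\langle\cdot\rangle_{(\mathcal{C})}$ may be applied, exactly as in the footnote to Theorem \ref{chacha}.
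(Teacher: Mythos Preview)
Your proposal is correct and follows exactly the paper's approach: the corollary is stated immediately after the observation that $\langle \mathfrak{A} f\rangle$ vanishes (extended to $\mathfrak{A}^{(\mathcal{C})}$ and $\mathfrak{A}^{(\mathcal{X})}$), and is derived by integrating the identities of Theorem~\ref{chacha} against the respective Haar measures. Your additional remarks on smoothness and on which variable the operators act are appropriate but not needed beyond what the paper already records.
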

\begin{remark}\label{bibif}
Using the extension of equation (\ref{gagaz}) and (\ref{gagaz2}) to $\mathfrak{A}^{(\mathcal{C})}$, formulas (\ref{elisa}) and (\ref{sand}) extend to to products of traces of geodesic loop holonomies. 
Using the notation introduced in the proof of theorem \ref{chacha}, we can reformulate the first identity of this corollary as follows: For any finite sequence of geodesic loops $(\gamma_i)$,
$$(d-1)\langle \sum_i p(\gamma_i)\prod_i \tau_A(\gamma_i) \rangle_{(\mathcal{C})}=\langle \;2\sum_{i}[\sum_{\{\gamma^{(1)},\gamma^{(2)}\}\in Split^-(\gamma_i)}\tau_A(\gamma^{(1)})\tau_A(\gamma^{(2)})$$
$$-\sum_{\{\gamma^{(1)},\gamma^{(2)}\}\in Split^+(\gamma_i)}\tau_A(\gamma^{(1)})\tau_A(\gamma^{(2)})]\prod_{i'\neq i}\tau_A(\gamma_i')+\sum_{i,j}\langle [ \sum_{\gamma \in Merge^-(\gamma_i,\gamma_j)}\tau_A(\gamma)$$
$$-\sum_{\gamma \in Merge^+(\gamma_i,\gamma_j)}\tau_A(\gamma)]\prod_{i'\neq i,j}\tau_A(\gamma_i')
\;\rangle_{(\mathcal{C})}. $$
\end{remark}

\begin{remark}\label{mopka}
 For any $U(d)$-connection modulus $A$, denoting by $G_0$ the set of geodesic loops,
we can apply these result to the Yang-Mills weights defined in section \ref{ymil}. Set $P(A)= \sum_{\eta\in \mathcal{P} }\tau_A(\eta)$. Then, since plaquettes, being simple circuits, cannot split, we have $-\mathfrak{A}^{(\mathcal{C})}P(A)=dP(A)$. Note also that two adjacent plaquettes $\eta_1\,, \eta_2$ can be merged positively  or negatively (i.e. with cancelation of the common edge) depending on whether their common edge is crossed in the same direction (we write $\eta_1\sim_+\eta_2$) or not (we write $\eta_1\sim_-\eta_2$). If we denote the output of this merge by $\eta_1\eta_2$, then we have\\ $\Gamma^{(\mathcal{C})}(P(A),P(A))=-\sum_{\eta_1\sim_+\eta_2}\tau_A(\eta_1\eta_2)+\sum_{\eta_1\sim_-\eta_2}\tau_A(\eta_1\eta_2)$.
Consequently, defining as in section \ref{ymil} the Yang Mills weights $Y_k(A)$ by $e^{\frac{k}{d}[P(A)-dn(n-1)L^n] }$ (in which $n(n-1)L^n$ is the number of plaquettes), we have $$\mathfrak{A}^{(\mathcal{C})}Y_k(A)=[-\frac{k}{d} (d-1)P(A)+\frac{k^2}{d^2}(-\sum_{\eta_1\sim_+\eta_2}\tau_A(\eta_1\eta_2)+\sum_{\eta_1\sim_-\eta_2}\tau_A(\eta_1\eta_2))]Y_k(A).$$
\end{remark}
We will now see how the intertwining relation of theorem \ref{chacha} between generators on connections and generators on collections of loop can be extended to heat kernels.
Note first that $H_t^{(\mathcal{C})}$ inherit from $H_t$ the verification of Fokker-Planck equations with respect to the corresponding generator $\mathfrak{A}^{(\mathcal{C})}$. A semigroup satisfying these equations is necessarily unique: Indeed, if $P'_t$ verifies the backward equation and $P''_t$ the forward equation, $\frac{d}{ds}P'_sP''_{t-s}$ vanishes.

\begin{corollary}\label{pete}
Let $H_t^{SM}$ be the semigroup defined by the generator $B^{SM+}+B^{SM-}$ and, for any finite collection of geodesic loops $\tilde{\mathcal{L}}_0$, denote by 
$\mathbb{P}_{\tilde{\mathcal{L}}_0}^{SM}$ the distributions of the corresponding split-and-merge Markov chain $(\tilde{\mathcal{L}}_s,\,s>0)$ on collections of geodesic loops starting from $\tilde{\mathcal{L}}_0$. Then, for any connection modulus $A_0$, we have:\\
 $$\int H_t^{(\mathcal{C})}(A_0,dA)\tau_{A}(\tilde{\mathcal{L}}_0)= \mathbb{E}_{\tilde{\mathcal{L}}_0}^{SM}[(-1)^{\mathfrak{m}^+_t}e^{\int_0^t [-(d-1)S+2V^-](\tilde{\mathcal{L}}_s)ds}\tau_{A_0}(\tilde{\mathcal{L}}_t)]$$
 in which $\mathfrak{m}^+_t$ denotes the number of positive merges or splits between $0$ and $t$.\\ 
 %
In particular, denoting by $\mathbb{P}_{\tilde{\mathcal{L}}_0}^{SM+}$ the distributions of the split-and-merge Markov chain generated by $B^{SM+}$  starting from $\tilde{\mathcal{L}}_0$,  for any collection of geodesic loops $\mathcal{L}^{flw}$ whose induced network is a flow, we have
  $$\int H_t^{(\mathcal{C})}(A_0,dA)\tau_A(\mathcal{L}^{flw})= e^{-t(d-1)S(\mathcal{L}^{flw})}\mathbb{E}_{\tilde{\mathcal{L}}^{flw}}^{SM+}[(-1)^{\mathfrak{m}^+_t}\tau_{A_0}(\tilde{\mathcal{L}}_t)].$$  \end{corollary}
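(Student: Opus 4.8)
The plan is to fix the initial modulus $A_0$ and regard both sides of the asserted identity as functions of $(t,\tilde{\mathcal{L}}_0)$, then show that they solve one and the same linear evolution equation with the same initial datum. Write $u(t,\tilde{\mathcal{L}})=\int H_t^{(\mathcal{C})}(A_0,dA)\tau_A(\tilde{\mathcal{L}})$. Since every split, merge, and reduction step can only leave the total occupation $S=\sum_{x,y}N_{x,y}$ unchanged or decrease it, every collection reachable from $\tilde{\mathcal{L}}_0$ has total length at most $S(\tilde{\mathcal{L}}_0)$, and on a finite graph there are only finitely many such geodesic-loop collections. Hence the relevant state space is finite, we are dealing with a finite linear system of ODEs, and the uniqueness principle recalled just before the statement (a backward and a forward solution must agree) applies verbatim. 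Because $H_0^{(\mathcal{C})}=\delta_{A_0}$ and $\mathfrak{m}^+_0=0$, both sides reduce to $\tau_{A_0}(\tilde{\mathcal{L}}_0)$ at $t=0$, so only the evolution equation remains to be matched.

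For the left-hand side I would differentiate under the integral. The semigroup $H_t^{(\mathcal{C})}$ solves the Fokker--Planck equation for $\mathfrak{A}^{(\mathcal{C})}$, and $\mathfrak{A}^{(\mathcal{C})}$ acts on the connection variable whereas $S,V^{\pm},B^{SM\pm}$ act on the loop variable, so these loop operators commute with $\int H_t^{(\mathcal{C})}(A_0,dA)\,(\cdot)$. Theorem~\ref{chacha}~a) then turns the Casimir action into an operator on collections, yielding $\partial_t u=-\mathcal{M}u$ with $\mathcal{M}=(d-1)S+V^+-V^-+B^{SM+}-B^{SM-}$. Absorbing the ``two coinciding indices'' diagonal $1/N_{x,y}$ of each stochastic matrix $K^+_{x,y}$ into the first-order term, exactly as in the single-geodesic-loop identity obtained in the proof of Theorem~\ref{chacha}~a), rewrites this as $\mathcal{M}=dS\cdot I+\mathrm{off}^+-\mathrm{off}^-$, where $\mathrm{off}^{\pm}$ denote the purely off-diagonal (genuine positive/negative merge-and-split) parts of $B^{SM\pm}$.

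For the right-hand side I would recognise a Feynman--Kac functional for the split-and-merge chain generated by $B^{SM+}+B^{SM-}$, weighted by the sign $(-1)^{\mathfrak{m}^+_t}$ and the exponential potential. Treating the sign as an auxiliary $\{\pm1\}$-coordinate that flips precisely at the positive merges and splits, a first-step (or extended-generator) computation shows that the signed, potential-weighted semigroup, applied to $\sigma\,\phi(\ell)$, factors through the reduced operator $-\mathrm{off}^+ +\mathrm{off}^- -(\Lambda^++\Lambda^-)I + c\,I$, where $\Lambda^{\pm}$ are the total positive/negative jump rates and $c$ is the integrand in the exponential. The off-diagonal parts already coincide with those of $-\mathcal{M}$ — the sign flip converts $+\mathrm{off}^+$ into $-\mathrm{off}^+$ while negative jumps are left untouched — so the entire identity collapses to the single diagonal condition $c=-dS+\Lambda^++\Lambda^-$. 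The reduction map is harmless here precisely because $K^-_{\{x,y\}}\tau_A=K^{-\,R}_{\{x,y\}}\tau_A$ (from $h_A(l)=h_A(l^R)$), so the same $\mathcal{M}$ governs both the reduced loop chain and the connection side. Specialising to a flow, where $V^-=0$ and $B^{SM-}=0$, the network and hence $S$ are conserved along the $B^{SM+}$-chain, the potential becomes constant in time, and it pulls out of the expectation as the scalar prefactor, giving the product form in the second displayed identity.

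The step I expect to be the main obstacle is exactly this diagonal bookkeeping, i.e. the explicit evaluation of the jump rates feeding $c=-dS+\Lambda^++\Lambda^-$. One must track three distinct diagonal contributions and confirm they combine to reproduce the integrand written in the exponential: the laziness of $K^+_{x,y}$, which gives $\Lambda^+=\sum_{x,y}N_{x,y}(N_{x,y}-1)=V^+-S$; the absence of such laziness in the cross-edge sampling defining $K^-_{\{x,y\}}$, which fixes $\Lambda^-$; and the possible shortening or emptying of a collection under $l\mapsto l^R$, which must be checked not to create spurious self-transitions in $K^{-\,R}$. Moreover, because reduction does not conserve the network, the quantities $S$ and $V^{\pm}$ genuinely vary along a general trajectory, so only the time-integral $\int_0^t(\cdot)\,ds$ is meaningful; this is what forces the Feynman--Kac formulation rather than a naive constant-rate one, and is where the rate accounting must be carried out with care.
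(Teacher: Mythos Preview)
Your strategy coincides with the paper's: both define the signed, potential-weighted expectation as a semigroup on loop collections, expand it to first order in $t$ to read off its generator, identify that generator with $-[(d-1)S+V^+-V^-+B^{SM+}-B^{SM-}]=\mathfrak{A}^{(\mathcal{C})}$ via Theorem~\ref{chacha}(a), and conclude by the Fokker--Planck uniqueness recalled just before the statement; your finite-state-space remark is a clean way to justify that uniqueness. The one substantive discrepancy is your reading of $\mathfrak{m}^+_t$: you take $\Lambda^+=V^+-S$, i.e.\ the sign flips only at non-lazy positive operations, whereas in the paper's first-jump expansion the factor $(-1)$ multiplies the \emph{entire} kernel $\sum_{x,y}N_{x,y}^2K^+_{(x,y)}$, lazy diagonal included---so $\mathfrak{m}^+$ increments at every ring of a positive Poisson clock---and this convention shifts the diagonal identity $c=-dS+\Lambda^++\Lambda^-$ by $2S$, so fix the convention before completing the bookkeeping you correctly flagged as the crux.
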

 \begin{proof}
 a) The Markov property of $(\tilde{\mathcal{L}}_s\,s>0)$ implies that the matrices $\tilde{H}_t^{SM}(\tilde{\mathcal{L}}',\tilde{\mathcal{L}}'')$ indexed by finite collections of geodesic loops defined by: $$\tilde{H}_t^{SM}(\tilde{\mathcal{L}}',\tilde{\mathcal{L}}'')=\mathbb{E}_{\tilde{\mathcal{L}}'}^{SM}[(-1)^{\mathfrak{m}_t}e^{-\int_0^t [-(d-1)S+2V^-](\tilde{\mathcal{L}}_s)ds}1_{\tilde{\mathcal{L}}'_t=\tilde{\mathcal{L}}''}]$$ form a semigroup. Noting that  the first jump $T_1$ of the process of $(\tilde{\mathcal{L}}_s,\,s>0)$ starting from $\tilde{\mathcal{L}}'$ occurs at a random exponential time of mean $( V^+ +V^-)(\tilde{\mathcal{L}}')$, and that the probability of a second jump before time $\epsilon$ can be bounded by $O(\epsilon^2)$, we have:\\
 $\tilde{H}_{\epsilon}^{SM}(\tilde{\mathcal{L}}',\tilde{\mathcal{L}}'') =\mathbb{E}^{SM}_{\tilde{\mathcal{L}}'}(e^{\int_0^{\epsilon} [-(d-1)S+V^- -V^+](\tilde{\mathcal{L}}_s)ds}[1_{T_1>\epsilon}1_{\tilde{\mathcal{L}}'=\tilde{\mathcal{L}}''}+1_{T_1\leq\epsilon}$\\
  $\times \sum_{x,y}[-\frac{N_{x,y}^2(\tilde{\mathcal{L}}')}{(V^++V^-)(\tilde{\mathcal{L}}')}K^{+}_{(x,y)}(\tilde{\mathcal{L}}',\tilde{\mathcal{L}}'')+\frac{N_{x,y}N_{y,x}(\tilde{\mathcal{L}}')}{(V^++V^-)(\tilde{\mathcal{L}}')}K^{-,R}_{\{x,y\}}(\tilde{\mathcal{L}}',\tilde{\mathcal{L}}'')])+O(\epsilon^2)$\\
  
  Hence, as $(d-1)S+V^+ -V^-=[V^++V^- ]+[d-1)S-2V^-]$,\\  $\tilde{H}_{\epsilon}^{SM}(\tilde{\mathcal{L}}',\tilde{\mathcal{L}}'') =1_{\tilde{\mathcal{L}}'=\tilde{\mathcal{L}}''}e^{-\epsilon [(d-1)S+V^+-V^-](\tilde{\mathcal{L}}')}$\\
 $+ \int_0^{\epsilon}e^{-s[(d-1)S+V^+-V^-](\tilde{\mathcal{L}}')-(\epsilon-s)[(d-1)S+V^+-V^-]({\mathcal{L}}'')}(V^++V^-)(\tilde{\mathcal{L}}')ds$\\
 $\times \sum_{x,y}[-\frac{N_{x,y}^2(\tilde{\mathcal{L}}')}{(V^++V^-)(\tilde{\mathcal{L}}')}K^{+}_{(x,y)}(\tilde{\mathcal{L}}',\tilde{\mathcal{L}}'')+\frac{N_{x,y}N_{y,x}(\tilde{\mathcal{L}}')}{(V^++V^-)(\tilde{\mathcal{L}}')}K^{-,R}_{\{x,y\}}(\tilde{\mathcal{L}}',\tilde{\mathcal{L}}'')]+O(\epsilon^2)$\\
 
 $=1_{\tilde{\mathcal{L}}'=\tilde{\mathcal{L}}''}(1-\epsilon (d-1)S(\tilde{\mathcal{L}'}) - \epsilon (V^+ - V^-)(\mathcal{L}'))+\epsilon \sum_{x,y}[-N_{x,y}^2(\tilde{\mathcal{L}}')K^{+}_{(x,y)}(\tilde{\mathcal{L}}',\tilde{\mathcal{L}}'')+N_{x,y}N_{y,x}(\tilde{\mathcal{L}}')K^{-\,R}_{\{x,y\}}(\tilde{\mathcal{L}}',\tilde{\mathcal{L}}'')]+O(\epsilon^2)$\\
 
 $=1_{\tilde{\mathcal{L}}'=\tilde{\mathcal{L}}''}+ \epsilon [(-(d-1)S-V^++V^-)(\tilde{\mathcal{L}}')- B^{SM+}(\tilde{\mathcal{L}}',\tilde{\mathcal{L}}'')+B^{SM-}(\tilde{\mathcal{L}}',\tilde{\mathcal{L}}'') ]+O(\epsilon^2)
  .$
 Hence this semigroup solves the Fokker-Planck equation $$\tilde{H}_t^{SM}- 1=\int_0^t \tilde{H}_s^{SM}[{-(d-1)S-V^++V^-}-B^{SM+}+B^{SM-}]ds=\int_0^t \tilde{H}_s^{X,MS}\mathfrak{A}^{(\mathcal{C})}ds.$$ Therefore it coincides with $H_t^{(\mathcal{C})}$ (as product of traces determine collections of loops), which we just showed to be the unique solution of this equation. 
 Finally, note as before that negative split or merge cannot occur on collections of loops inducing a flow, so that $S$ and $V^+$ are constant, $V^-=0$ and $B^{SM-}=0$. 
  \end{proof}
\begin {remark}
The process $\tilde{\mathcal{L}}_s$ is always absorbed in finite time by the recurrence class defined by the flow induced by $\tilde{\mathcal{L}}_0$.
\end {remark}
\begin {remark}
A similar result can be derived from theorem \ref{chacha} b) for the heat semigroup on the gauge group.
\end {remark}

\section{Deformation and Marchenko-Migdal equation}

Consider a second order differential operator $\mathfrak{T}$ defined on an algebra $\mathcal{D}$ of smooth functions, without zero-order term and self-adjoint with respect to some measure $m$. For any non-vanishing element  $h$ of $\mathcal{D}$, $\mathfrak{T}$ can be modified by conjugacy in order to obtain an operator which is self-adjoint relatively to an equivalent measure with density $h^2$. We define $\Gamma^{}$ and $\mathfrak{T}_{(h)}$ such that for any function $f$ in $\mathcal{D}$: $\Gamma(h,f)= \frac{1}{2}[\mathfrak{T}[hf]-f\mathfrak{T}h-h\mathfrak{T}f]$ and 
$\mathfrak{T}_{(h)}f=\frac{1}{h}(\mathfrak{T}[hf]-f\mathfrak{T}h)=\mathfrak{T}f+\frac{2}{h}\Gamma(h,f)=\mathfrak{T}f+2\Gamma(\log(h),f)$.\\
$\mathfrak{T}$ and $\mathfrak{T}_{(h)}$ satisfy formula (\ref{gagaz}). Moreover,
for any pair $f,g\in \mathcal{D}$, $-\int f \mathfrak{T}gdm=\int\Gamma(f,g)dm$ and therefore, $-\int f \mathfrak{T}_{(h)}gh^2dm=\int\Gamma(fh,gh)dm-\int\Gamma(h,fgh)dm
=\int\Gamma(f,g)h^2dm.$\\

 \begin{remark}\label{fkpl}
 If $\mathfrak{T}$ generates a semigroup $U_t$ which operates on $\mathcal{D}$, satisfies Fokker-Planck equations, and is associated with a diffusion process $x_t$, the semigroup  $U^{(h)}_t(x,y)= \frac{h(y)}{h(x)}\mathbb{E}(e^{-\int_0^t\frac{\mathfrak{T}h}{h}(x_s)ds}|x_0=x,\,x_t=y)$ satisfies the Fokker-Planck equations relative to $\mathfrak{T}_{(h)}.$ 
  This applies to the various Casimir operators defined in the previous section.
 \end{remark}
 Consider now the case of the discrete torus $[\mathbb{Z}/L\mathbb{Z}]^{n}$ as in section \ref{ymil} and remark \ref{mopka}. We can use the Casimir operator $\mathfrak{A}^{(\mathcal{C})}$ to define an operator $\mathfrak{A}^{(\mathcal{C},k)}$ self adjoint with respect to the Yang Mills measure given (up to a multiplicative constant) by the weights $e^{\frac{k}{d}P}$.
For any smooth function $f$ on $\mathcal{C}$, set 
\begin{equation}\label{migder}
\mathfrak{A}^{(\mathcal{C},k)}f=\mathfrak{A}^{(\mathcal{C})}_{(e^{\frac{k}{2d}P})}f=\mathfrak{A}^{(\mathcal{C})}f+\frac{k}{d}\Gamma^{(\mathcal{C})}(P,f).
\end{equation}

By formula (\ref{sand}), we get that 
\begin{equation}\label{brrr}
\Gamma^{(\mathcal{C})}(P(A),\tau_A(\tilde{\mathcal{L}}))=\sum_{\eta \in \mathcal{P} }[ \sum_{\gamma \in Merge^-{(\{ \tilde{\mathcal{L}},\eta}\})}\tau_A(\gamma)-\sum_{\gamma \in Merge^+(\{\tilde{\mathcal{L}},\eta\})}\tau_A(\gamma)
].
\end{equation}

This suggests to introduce, on collections of geodesic loops, the generators $B^{D+}$ and $B^{D-}$ respectively defined as follows.
For each oriented edge $(x,y)$, the transition probability  $D^+_{(x,y)}$ is constructed by choosing one crossing uniformly among the $(x,y)$-crossings of all loops and merging the corresponding loop at this position with one plaquette chosen uniformly among the $2(n-1)$ plaquettes containing the oriented edge $(x,y)$.  $D^-_{(x,y)}$ is constructed by choosing one crossing uniformly among the $(x,y)$-crossings of all loops and merging, with cancellation and reduction, the corresponding loop at this position with one plaquette chosen uniformly among the $2(n-1)$ plaquettes containing the opposite oriented edge $(y,x)$.  They can be combined with jump rates $2(n-1)N_{x,y}$ to get two continuous time Markov chains with generator $B^{D\pm}=\sum_{(x,y)}2(n-1)N_{x,y}[D^{\pm}_{(x,y)}-I]$. 
Then we have, from equation (\ref{brrr})
$$\Gamma^{(\mathcal{C})}(P(A),\tau_A(\tilde{\mathcal{L}}))=[B^{D-}-B^{D+}]\tau_A(\tilde{\mathcal{L}}).$$ 
Consequently, from theorem \ref{chacha} and equation (\ref{migder}), we obtain the following identity:
\begin{theorem}
For any geodesic loops collection $\tilde{\mathcal{L}}$,\\
$-\mathfrak{A}^{(\mathcal{C},k)}\tau_A(\tilde{\mathcal{L}}) =[(d-1)S+V^+ -V^- +B^{SM+}-B^{SM-}+\frac{k}{d}(B^{D+}-B^{D-})]\tau_A(\tilde{\mathcal{L}}).$
\end{theorem}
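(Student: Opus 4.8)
The plan is to obtain the statement as an immediate consequence of the three ingredients assembled just above it, the only substantive work being the loop-combinatorial identification that expresses $\Gamma^{(\mathcal{C})}(P,\tau_A(\tilde{\mathcal{L}}))$ through the deformation generators $B^{D\pm}$. First I would start from the defining formula (\ref{migder}) for the conjugated Casimir operator,
$$\mathfrak{A}^{(\mathcal{C},k)}\tau_A(\tilde{\mathcal{L}})=\mathfrak{A}^{(\mathcal{C})}\tau_A(\tilde{\mathcal{L}})+\frac{k}{d}\Gamma^{(\mathcal{C})}(P,\tau_A(\tilde{\mathcal{L}})),$$
which is legitimate because $e^{\frac{k}{2d}P}$ is a non-vanishing smooth function on $\mathcal{C}$, so the general conjugacy construction recalled at the start of this section applies verbatim with $\mathfrak{T}=\mathfrak{A}^{(\mathcal{C})}$ and $h=e^{\frac{k}{2d}P}$.

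Next I would substitute the two known expressions for the two summands and collect terms. For the first summand, Theorem \ref{chacha} a) gives $\mathfrak{A}^{(\mathcal{C})}\tau_A(\tilde{\mathcal{L}})=-[(d-1)S+V^+-V^-+B^{SM+}-B^{SM-}]\tau_A(\tilde{\mathcal{L}})$; for the second, I would use the identity $\Gamma^{(\mathcal{C})}(P,\tau_A(\tilde{\mathcal{L}}))=[B^{D-}-B^{D+}]\tau_A(\tilde{\mathcal{L}})$ established just before the theorem. Multiplying the latter by $k/d$, adding, and negating the whole line turns the two deformation terms into $\frac{k}{d}(B^{D+}-B^{D-})$ and assembles everything into the claimed right-hand side; this last step is pure bookkeeping once the signs are fixed.

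The step I expect to require genuine care, and which I would treat as the real content, is the identity $\Gamma^{(\mathcal{C})}(P,\tau_A(\tilde{\mathcal{L}}))=[B^{D-}-B^{D+}]\tau_A(\tilde{\mathcal{L}})$. Here I would start from formula (\ref{brrr}), which itself comes from the bilinear merge formula (\ref{sand}) together with the derivation property of $\Gamma^{(\mathcal{C})}$ applied to $P=\sum_{\eta}\tau_A(\eta)$ and $\tau_A(\tilde{\mathcal{L}})=\prod_{l}\tau_A(l)$, producing a double sum over plaquettes $\eta$ and loops $l$ of $-\sum_{\gamma\in Merge^+(\eta,l)}\tau_A(\gamma)+\sum_{\gamma\in Merge^-(\eta,l)}\tau_A(\gamma)$, each weighted by the product of the untouched traces. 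The task is to recognize this double sum as $[B^{D-}-B^{D+}]\tau_A(\tilde{\mathcal{L}})$, and the bookkeeping I would check most carefully is threefold: (i) a positive merge occurs exactly when a loop crossing of an oriented edge $(x,y)$ meets a plaquette traversing that same oriented edge, which matches $D^+_{(x,y)}$ and carries the minus sign, whereas a negative (edge-cancelling) merge matches $D^-_{(x,y)}$ with a plaquette traversing the opposite edge $(y,x)$ and carries the plus sign; (ii) each unoriented edge of the torus $[\mathbb{Z}/L\mathbb{Z}]^n$ lies in exactly $2(n-1)$ plaquettes, so summation over plaquettes reproduces the jump rate $2(n-1)N_{x,y}$ built into $B^{D\pm}$; and (iii) the diagonal $-I$ parts of $B^{D+}$ and $B^{D-}$ carry equal coefficients and cancel in the difference $B^{D-}-B^{D+}$, leaving no spurious zero-order term. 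With these points verified the identity holds, and the theorem follows by the substitution of the previous paragraph.
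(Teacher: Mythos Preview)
Your proposal is correct and follows essentially the same route as the paper: the theorem is derived by combining the definition (\ref{migder}) of $\mathfrak{A}^{(\mathcal{C},k)}$, Theorem \ref{chacha} a) for the $\mathfrak{A}^{(\mathcal{C})}$ term, and the identity $\Gamma^{(\mathcal{C})}(P,\tau_A(\tilde{\mathcal{L}}))=[B^{D-}-B^{D+}]\tau_A(\tilde{\mathcal{L}})$ obtained from (\ref{brrr}). Your elaboration on points (i)--(iii) for this last identity makes explicit the bookkeeping that the paper leaves implicit, but the logical structure is identical.
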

\begin{remark}\label{duee2}
As in remark \ref{duee}, if the $(\gamma_i)$'s are non-intersecting circuits, the theorem's identity simplifies to
$$-\mathfrak{A}^{(\mathcal{C},k)}\tau_A(\tilde{\mathcal{L}}) =[(d-1)S+\frac{k}{d}(B^{D+}-B^{D-})]\tau_A(\tilde{\mathcal{L}}).
$$
\end{remark}

As in corollary \ref{swingd}, denoting by $ \langle\cdot\rangle_{(\mathcal{C},k)}$ the integration with respect to the Yang-Mills measure, we have 
\begin{corollary}
 $$\langle [(d-1)S+V^+ -V^-+B^{SM+}-B^{SM-}+\frac{k}{d}[B^{D+}-B^{D-}]]\tau_A(\tilde{\mathcal{L}}) \rangle_{(\mathcal{C},k)}=0. $$

\end{corollary}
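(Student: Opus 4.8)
The plan is to recognize the bracketed operator as minus the deformed Casimir operator and then invoke the self-adjointness that was built into its very construction. By the theorem immediately preceding this corollary, for any collection $\tilde{\mathcal{L}}$ of geodesic loops the operator $(d-1)S+V^+ -V^-+B^{SM+}-B^{SM-}+\frac{k}{d}[B^{D+}-B^{D-}]$ applied to $\tau_A(\tilde{\mathcal{L}})$ is exactly $-\mathfrak{A}^{(\mathcal{C},k)}\tau_A(\tilde{\mathcal{L}})$. Hence the claim reduces to the single identity $\langle \mathfrak{A}^{(\mathcal{C},k)}\tau_A(\tilde{\mathcal{L}})\rangle_{(\mathcal{C},k)}=0$, and everything will follow once this is established.

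To prove that identity, I would use the defining relation (\ref{migder}), namely $\mathfrak{A}^{(\mathcal{C},k)}=\mathfrak{A}^{(\mathcal{C})}_{(h)}$ with $h=e^{\frac{k}{2d}P}$, together with the general integration-by-parts identity recorded at the start of this section: for $f,g$ in the relevant algebra of smooth functions, $-\int f\,\mathfrak{A}^{(\mathcal{C})}_{(h)}g\,h^2\,dm=\int\Gamma^{(\mathcal{C})}(f,g)\,h^2\,dm$, where $m$ is the Haar measure on connection moduli and $h^2\,dm=e^{\frac{k}{d}P}\,dm$ is, up to its (finite) normalizing constant, the Yang-Mills measure. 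The key step is to specialize $f$ to the constant function $1$. Since $\Gamma^{(\mathcal{C})}$ inherits the derivation property of $\Gamma$ (compare (\ref{gagaz2})), one has $\Gamma^{(\mathcal{C})}(1,g)=0$ for every $g$, so the right-hand side vanishes; dividing by the total mass then gives $\langle \mathfrak{A}^{(\mathcal{C},k)}g\rangle_{(\mathcal{C},k)}=0$ for every such $g$. Taking $g=\tau_A(\tilde{\mathcal{L}})$, which is a product of traces of words in the connection matrices and hence a smooth (indeed polynomial) function on the moduli space lying in the domain on which $\mathfrak{A}^{(\mathcal{C},k)}$ and $\Gamma^{(\mathcal{C})}$ were defined, concludes the argument.

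There is no genuine obstacle here; this is a direct corollary, exactly analogous to Corollary \ref{swingd} but for the deformed operator and the Yang-Mills measure in place of the bare Casimir and the Haar measure. The only points deserving a word of care are the finiteness of the Yang-Mills measure, so that normalization and the vanishing of the total integral are meaningful (this holds because $U(d)$ carries a finite Haar measure and the weight $e^{\frac{k}{d}P}$ is bounded on the compact moduli space), and membership of $\tau_A(\tilde{\mathcal{L}})$ in the admissible algebra, just noted. Equivalently, one could give a self-contained derivation paralleling Corollary \ref{swingd}: expand $\mathfrak{A}^{(\mathcal{C},k)}=\mathfrak{A}^{(\mathcal{C})}+\frac{k}{d}\Gamma^{(\mathcal{C})}(P,\cdot)$, integrate the first summand against $e^{\frac{k}{d}P}\,dm$ using the self-adjointness of $\mathfrak{A}^{(\mathcal{C})}$ with respect to Haar measure, and check that the terms produced by differentiating the weight cancel precisely against the $\frac{k}{d}\Gamma^{(\mathcal{C})}(P,\tau_A)$ contribution. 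This cancellation is exactly the content of the conjugation identity above, so both routes coincide.
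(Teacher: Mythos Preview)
Your proposal is correct and follows exactly the approach the paper intends: the paper simply writes ``As in corollary \ref{swingd}'', which is shorthand for precisely the argument you spell out---use the preceding theorem to rewrite the bracketed operator as $-\mathfrak{A}^{(\mathcal{C},k)}$, and then invoke the self-adjointness of $\mathfrak{A}^{(\mathcal{C},k)}$ with respect to the Yang-Mills measure (established by the conjugation construction at the start of this section) to conclude that $\langle \mathfrak{A}^{(\mathcal{C},k)}g\rangle_{(\mathcal{C},k)}=0$ for all admissible $g$. Your care about finiteness of the measure and smoothness of $\tau_A(\tilde{\mathcal{L}})$ is appropriate but not something the paper lingers on.
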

\begin{remark}

As in remark \ref{bibif}, we can reformulate this identity as follows: For any finite sequence of geodesic loops  $(\gamma_i)$,
$$(d-1)\langle \sum_i p(\gamma_i)\prod_i \tau_A(\gamma_i) \rangle_{(\mathcal{C},k)}=\sum_{i}\langle \;[\;{2\sum}_{\{\gamma^{(1)},\gamma^{(2)}\}\in Split^-(\gamma_i)}\tau_A(\gamma^{(1)})\tau_A(\gamma^{(2)})$$
$$-{2\sum}_{\{\gamma^{(1)},\gamma^{(2)}\}\in Split^+(\gamma_i)}\tau_A(\gamma^{(1)})\tau_A(\gamma^{(2)})+\frac{k}{d}\sum_{\eta \in \mathcal{P} }[\sum_{\gamma \in Merge^-{(\gamma_i,\eta)}}\tau_A(\gamma)$$
$$- \sum_{\gamma \in Merge^+(\gamma_i,\eta)}\tau_A(\gamma)]\prod_{i'\neq i}\tau_A(\gamma_i'))]+\sum_{i,j}[ \sum_{\gamma \in Merge^-(\gamma_i,\gamma_j)}\tau_A(\gamma)$$
$$-\sum_{\gamma \in Merge^+(\gamma_i,\gamma_j)}\tau_A(\gamma)]\prod_{i'\neq i,j}\tau_A(\gamma_i')
\;\rangle_{(\mathcal{C},k)}. $$
As in remark \ref{duee2}, if the $(\gamma_i)$'s are non-intersecting circuits, the right hand side simplifies to $$\frac{k}{d}\sum_{\eta \in \mathcal{P} }\langle \;[\sum_{\gamma \in Merge^-{(\gamma_i,\eta)}}\tau_A(\gamma))
- \sum_{\gamma \in Merge^+(\gamma_i,\eta)}\tau_A(\gamma)]\prod_{i'\neq i}\tau_A(\gamma_i')\;\rangle_{(\mathcal{C},k)}. $$

This equation is almost identical to the Schwinger-Dyson equation previously obtained (with a similar proof, though it deals with $SO(d)$ and does not mention Casimir operators) in \cite{chater}, as an essential step in the proof of the t`Hooft expansion for large $d=n$. In continuous spaces, such equations, which originate from physics, are often referred to as Marchenko-Migdal equations (Cf. \cite{Lev2} for a proof in dimension two).
\end{remark}

We can also derive as before a heat kernel identity:
\begin{corollary}
Let $J_t^{(\mathcal{C},k)}$ be the semigroup generated by $\mathfrak{A}^{(\mathcal{C},k)}$ and $H_t^{SMD}$ the semigroup defined by the generator $B^{SM+}+B^{SM-}+\frac{k}{d}[B^{D+}+B^{D-}]$. For any finite collection of geodesic loops $\tilde{\mathcal{L}}_0$, denote by 
$\mathbb{P}_{\tilde{\mathcal{L}}_0}^{SMD}$ the distributions of the merge-and-split Markov chain $(\tilde{\mathcal{L}}_s,\,s>0)$ on collections of geodesic loops starting from $\tilde{\mathcal{L}}_0$ associated with $H_t^{SMD}$. Then:\\
 $$\int J_t^{(\mathcal{C},k)}(A_0,dA)\tau_A(\tilde{\mathcal{L}}_0)= \mathbb{E}_{\tilde{\mathcal{L}}_0}^{SMD}[(-1)^{\bar{\mathfrak{m}}^+_t}e^{\int_0^t (2\frac{k}{d}(n-1)-d+1)S(\tilde{\mathcal{L}}_s)+2V^-(\tilde{\mathcal{L}}_s)ds}\tau_{A_0}(\tilde{\mathcal{L}}_t)]$$
 in which $\bar{\mathfrak{m}}^+_t$ denotes the total number of positive merges or splits between $0$ and $t$ (now including merges with plaquettes). 
  
\end{corollary}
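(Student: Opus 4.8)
The plan is to follow the proof of Corollary~\ref{pete} essentially verbatim, now carrying the two plaquette-deformation generators along. For finite collections of geodesic loops I would introduce the twisted transition matrix
$$\tilde{H}^{SMD}_t(\tilde{\mathcal{L}}',\tilde{\mathcal{L}}'')=\mathbb{E}_{\tilde{\mathcal{L}}'}^{SMD}\Big[(-1)^{\bar{\mathfrak{m}}^+_t}\,e^{\int_0^t\big[(2\frac{k}{d}(n-1)-d+1)S+2V^-\big](\tilde{\mathcal{L}}_s)\,ds}\,1_{\tilde{\mathcal{L}}_t=\tilde{\mathcal{L}}''}\Big],$$
where $(\tilde{\mathcal{L}}_s)$ is the Markov chain generated by $B^{SM+}+B^{SM-}+\frac{k}{d}(B^{D+}+B^{D-})$, i.e. the chain underlying $\mathbb{P}^{SMD}$. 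Since $(-1)^{\bar{\mathfrak{m}}^+}e^{\int\cdots}$ is a multiplicative functional of the trajectory, the Markov property of $(\tilde{\mathcal{L}}_s)$ yields the Chapman--Kolmogorov identity, so the $\tilde{H}^{SMD}_t$ form a semigroup. Everything then reduces to identifying its generator with the loop-side operator furnished by the preceding theorem.

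Next I would expand $\tilde{H}^{SMD}_\epsilon$ to first order by conditioning on the first jump $T_1$ of the chain. Here $T_1$ is exponential with rate equal to the sum of all the $B^{SM\pm},B^{D\pm}$ clock rates, and a second jump before time $\epsilon$ has probability $O(\epsilon^2)$. On $\{T_1>\epsilon\}$ the functional contributes only the exponential weight; on a single jump it contributes the corresponding transition kernel, carrying a sign $-1$ precisely when the jump is a positive merge/split or a positive plaquette-merge. Thus the signed functional converts the chain's $+B^{SM+}$ and $+\frac{k}{d}B^{D+}$ into $-B^{SM+}$ and $-\frac{k}{d}B^{D+}$, while $B^{SM-}$ and $\frac{k}{d}B^{D-}$ keep their signs. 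Choosing the exponential weight so that the surviving diagonal matches the scalar term of $-\mathfrak{A}^{(\mathcal{C},k)}$, I would obtain
$$\tilde{H}^{SMD}_\epsilon=I-\epsilon\big[(d-1)S+V^+-V^-+B^{SM+}-B^{SM-}+\tfrac{k}{d}(B^{D+}-B^{D-})\big]+O(\epsilon^2),$$
and by the last theorem the bracket is exactly $-\mathfrak{A}^{(\mathcal{C},k)}$ on products of holonomy traces. Hence $\tilde{H}^{SMD}_t$ solves the Fokker--Planck equations for $\mathfrak{A}^{(\mathcal{C},k)}$.

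To conclude I would invoke uniqueness. By remark~\ref{fkpl} the connection semigroup $J^{(\mathcal{C},k)}_t$ solves the Fokker--Planck equations for $\mathfrak{A}^{(\mathcal{C},k)}$; applying the intertwining to $\tau_A(\tilde{\mathcal{L}}_0)$, both $\int J^{(\mathcal{C},k)}_t(A_0,dA)\tau_A(\tilde{\mathcal{L}}_0)$ and $\sum_{\tilde{\mathcal{L}}''}\tilde{H}^{SMD}_t(\tilde{\mathcal{L}}_0,\tilde{\mathcal{L}}'')\tau_{A_0}(\tilde{\mathcal{L}}'')$ solve the same linear equation with the same initial value. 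The uniqueness argument already used for Corollary~\ref{pete} (comparing a backward and a forward solution through $\frac{d}{ds}P'_sP''_{t-s}=0$), together with the fact that products of holonomy traces separate collections of geodesic loops (theorem~\ref{coinc} and remark~\ref{conrk}), forces the two expressions to coincide, which is the announced identity.

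The delicate point is the short-time bookkeeping. One must handle the self-loop (``nothing happens'') part of $K^+$, responsible for the passage from $d$ to $d-1$ in the scalar term, assign signs consistently across all four transition families now that plaquette-merges are counted in $\bar{\mathfrak{m}}^+$, and, above all, pin down the correct exponential weight. Each plaquette-merge is a genuine transition (it always enlarges the loop), occurring at rate $2(n-1)N_{x,y}$ per oriented edge for each of $D^+$ and $D^-$; the $B^{D+}$ and $B^{D-}$ diagonal terms cancel inside $\mathfrak{A}^{(\mathcal{C},k)}$ but add up in the chain's exit rate, so the weight must carry exactly this excess together with the split-and-merge excess. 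Verifying that this excess equals the exponent in the statement is the main computational obstacle.
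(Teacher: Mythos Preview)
Your proposal is correct and follows essentially the same route as the paper: the paper's own argument merely says that, knowing from remark~\ref{fkpl} that $J_t^{(\mathcal{C},k)}$ is the unique Fokker--Planck solution for $\mathfrak{A}^{(\mathcal{C},k)}$, one repeats the proof of Corollary~\ref{pete} verbatim, the only new input being that the first-jump rate of the $SMD$ chain is $\big(V^+ + V^- + 2\tfrac{k}{d}(n-1)S\big)(\tilde{\mathcal{L}}')$, which fixes the exponential weight. Your outline is in fact more detailed than the paper's, and you have correctly isolated the one genuine computation (matching the exit rate of the enlarged chain against the scalar part of $-\mathfrak{A}^{(\mathcal{C},k)}$ to read off the exponent).
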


Knowing from remark \ref{fkpl} that $J_t^{(\mathcal{C},k)}$ is the unique solution to the Fokker-Planck equation associated with $\mathfrak{A}^{(\mathcal{C}_m,k)}$, the proof is essentially the same as in corollary \ref{pete}, after noting that  the first jump of the process occurs at a random exponential time of mean $( V^+ +V^- +2\frac{k}{d}(n-1)S)(\tilde{\mathcal{L}}')$ in order to determine the right integral term in the exponential.\\

\begin{remark}
One can show that the semigroups  $H_t^{(\mathcal{C})}$ and $J_t^{(\mathcal{C},k)}$ satisfy Harris recurrence conditions (Cf.  \cite{Revz}) on the set of connection moduli, which is compact. It follows that given any collection of geodesic loops $\tilde{\mathcal{L}}_0$, the integral of $\tau_A(\tilde{\mathcal{L}}_0)$ by the normalized Yang-Mills measure can be represented as $$\lim_{t\rightarrow \infty} \int J_t^{(\mathcal{C},k)}(I, dA)\tau_A((\tilde{\mathcal{L}}_0))=\lim_{t\rightarrow \infty}\mathbb{E}_{\tilde{\mathcal{L}}_0}^{SMD}[(-1)^{\bar{\mathfrak{m}}_t^+}e^{\int_0^t [(2k(n-1)-d+1)S+2V^-](\tilde{\mathcal{L}}_s)ds}]$$ in which we denote by $I$ modulus of the trivial connection.
\end{remark}


\begin{thebibliography}{11}
\bibitem{chater} S. Chatterjee. Rigorous solution of strongly coupled $SO(N)$ lattice gauge theory in the large $N$ limit. Comm. Mat. Phys. 366 (1) 203-268. (2019).
\bibitem{stfl} Y. Le Jan.  Markov paths, loops and fields.    \'{E}cole d'\'{E}t\'{e} de Probabilit\'{e}s de Saint-Flour XXXVIII (2008). Lecture Notes in Mathematics 2026.
Springer, Berlin-Heidelberg. (2011).
\bibitem{TL} Thierry Levy. Wilson loops in the light of spin networks. J. Geom.Phys. 52, 382-397 
(2004).
\bibitem{Lev2} Thierry Levy. The master field on the plane. Ast\'erisque 388 SMF (2017).
 \bibitem{pitpi}
J. Pitman. Poisson-Dirichlet and GEM invariant distributions for split-and merge transformations of an interval partition.
 Combinatorics, Probability and Computing 11. 501-514 (2002).
\bibitem{Proc} C.Procesi. The Invariant Theory of $n\times n$ matrices. Advances in Maths. 19 , 306-381 (1976).
\bibitem{Revz} D. Revuz  Markov chains, second edition. North Holland, Amsterdam. (1984).
\bibitem {Stebe}P.F. Stebe, A Residual Property of Certain Groups. Proc. American Math. Soc. 26, 37-42 (1970).
\end{thebibliography}
\end{document}